\documentclass[a4paper,12pt]{article}

\usepackage{amssymb}
\usepackage{amsfonts,dsfont}
\usepackage{amsmath}
\usepackage{amsthm}
\usepackage{cite}
 \usepackage{enumitem}
\numberwithin{equation}{section}
\usepackage{amsmath}

\usepackage{xcolor}

\newcommand{\R}{\mathbb{R}}
\newcommand{\Z}{\mathbb{Z}}
\newcommand{\N}{\mathbb{N}}
\newcommand{\cuad}{{\sqcap\kern-.68em\sqcup}}

\newcommand{\norm}[1]{\|#1\|}

\newtheorem{definition}{Definition}[section]
\newtheorem{theorem}[definition]{Theorem}
\newtheorem{proposition}[definition]{Proposition}

\newtheorem{lemma}[definition]{Lemma}
\newtheorem{corollary}[definition]{Corollary}
\newtheorem{remark}{Remark}[section]

\renewcommand{\phi}{\varphi}

\newcommand{\cE}{{\mathcal E}}

\newcommand{\cL}{{\mathcal L}}

\newcommand{\cP}{{\mathcal P}}

\newcommand{\bH}{{\mathbb H}}

 \allowdisplaybreaks

\begin{document}

\begin{center}
{\bf \large   Dirichlet Eigenvalues  for the Logarithmic-Hardy operator }
\bigskip
\medskip

{\small
Tangrui Liao\footnote{liaotangrui@163.com} \qquad  Ying Wang\footnote{yingwang00@126.com}\qquad
Minghua Yang\footnote{minghuayang@jxufe.edu.cn}

\bigskip \medskip

School of Information Management and Mathematics, Jiangxi University \\
of  Finance and Economics,  Nanchang, Jiangxi 330032,  PR China\\[10pt]

}
\begin{abstract}\small
In this paper, we study the Dirichlet eigenvalue problem for the logarithmic Hardy operator, defined as the logarithmic Laplacian with  a critical  logarithmic potential, in a bounded Lipschitz domain containing the origin. We first establish a logarithmic Hardy inequality and use it to identify the associated energy space, showing that the embedding into \(L^2\) is compact when the perturbation parameter exceeds the endpoint value \(-1\), but fails to be compact at that endpoint. For parameters above \(-1\), we develop a complete variational spectral theory, prove that  the problem admits a discrete sequence of eigenvalues tending to infinity, characterized by successive minimizations over orthogonal complements, whose eigenfunctions form a complete orthonormal basis of \(L^2\).

We further establish a uniform lower bound for the first eigenvalue, a scaling property under domain dilation that determines a critical radius for the sign of the first eigenvalue and leaves the eigenvalue gaps invariant.  
\end{abstract}
\end{center}
{\footnotesize

 \textbf{Keywords:}  Logarithmic Laplacian; Hardy potential; Dirichlet eigenvalues.

 \textbf{MSC2010:} 35P15;  35R09; 47G30
}
\bigskip

\setcounter{equation}{0}
\section{Introduction}
Our purpose of this paper is to study the eigenvalue problem
\begin{equation}\label{eq: 1.1}
 \left\{
\begin{array}{lll}
\displaystyle  \cL_\mu u= \lambda u\quad
 & {\rm in}\  \   \Omega,
\\[2mm]
 \phantom{ -\ \,   }
 \displaystyle u=0 \quad
 & {\rm in}\  \, \R^N\setminus  \Omega,
 \end{array}
 \right.
 \end{equation}
 where  $0\in \Omega\subset \R^N$ is a  bounded Lipschitz domain with $N\geq 1$, $\mu\geq -1$
 and $\cL_\mu$ is the logarithmic Hardy operator defined by
 $$\cL_\mu u=(-\Delta)^{\ln} u+2\mu\big (\log\frac1{|x|}\big) u,$$
the operator $(-\Delta)^{\ln}$ is the logarithmic Laplacian whose Fourier symbol is $2\ln|\xi|$ and
which arises formally as the derivative at $s=0$ of the fractional Laplacian $(-\Delta)^s$, Chen and Weth \cite{ChenWeth2019} gives its expression
 \begin{align*}
  (-\Delta)^{\ln} u(x)&= \left.\frac{d}{ds}\right|_{s=0}[(-\Delta)^su](x)
  \\[2mm]&=c_N\,  {\rm p.v.} \int_{B_1(x)}\frac{u(x)-u(y)}{|x-y|^{N}}\,dy	
  - c_N\int_{\mathbb{R}^N\setminus B_1(0)}\frac{u(x+y)}{|y|^N}\,dy+\rho_N\,u(x),
\end{align*}
for $u\in C_c^\beta$ and some $\beta>0$,
where $B_1(x)$ denotes the open unit ball in $\mathbb{R}^N$ centred at  $x$,
$$
	c_N=\frac{\Gamma(\frac N2)}{\pi^{\frac N2}},
	\qquad \
	\rho_N=2\ln 2+\psi\big(\frac N2\big)-\gamma,
$$
$\Gamma$ denotes the Gamma function,	 $\gamma=-\Gamma'(1)$ is the Euler--Mascheroni constant,
$\psi=\frac{\Gamma'}{\Gamma}$ is the Digamma function.

Following the seminal work of Chen and Weth \cite{ChenWeth2019},  this structural distinction underscores the intrinsic nature of $(-\Delta)^{\ln}$ as a genuinely nonlocal operator of zero order, rather than a limiting case of fractional powers. Consequently, understanding its fundamental properties including its spectral characterization, integral representation, and conformal behavior has become increasingly important across multiple mathematical contexts.   Specifically, $(-\Delta)^{\ln}$ arises naturally in:
(i) the asymptotic analysis of the Dirichlet eigenvalue problem for $(-\Delta)^s$ as $s \to 0^+$, see \cite{FeuJa};
(ii) $s$-dependent nonlinear Dirichlet problems in the small-$s$ regime, motivated by optimization models where the optimal order is small, in image processing and population dynamics,
see \cite{sprekels.valdinoci,pellacci.verzini,hs.saldana};
and (iii) the geometric study of the $0$-fractional perimeter, see \cite{DNP} and references therein.
Chen, Hauer and Weth in \cite{CDW} established that the logarithmic Laplacian $(-\Delta)^{\ln}$ admits
 a unique extension property distinct from the Caffarelli--Silvestre extension for the fractional Laplacian $(-\Delta)^s$ in \cite{Caffarelli-Silvestre}. \smallskip

When $\mu=0$, quantitative eigenvalue bounds and further spectral properties for the
unperturbed logarithmic Laplacian were subsequently studied in
\cite{ChenVeron2023,LaptevWeth2021,Chen2026}. Feulefack, Jarohs and Weth
\cite{FeuJa} built relate small order asymptotics of fractional Dirichlet eigenvalues and subsequential
limits of normalized eigenfunctions to the spectral data for the
logarithmic Laplacian. Critical singular potentials also play a central
role in fractional spectral inequalities in
\cite{FrankLiebSeiringer2008}.
Recently, Arora, Mukherjee and Vaishnavi in \cite{AroraMukherjeeVaishnavi2026}
develop variational spectral theory for the indefinite-weight problem
$(-\Delta)^{\ln} u=\lambda\omega(x)u$, using a logarithmic Hardy--Pitt
estimate, where $\omega$ is a subcritical weight.  In contrast to the weight multiplying the spectral parameter
in that problem (\ref{eq: 1.1}) contains the additive
singular perturbation $\mu(\log\frac1{|x|})u$.

The aim of this paper is to study the eigenvalue problem  (\ref{eq: 1.1}) with $\mu\geq-1$, to this end,
we first introduce the related spaces.  Let
 $$\bH^{\ln}_0 (\Omega)= \overline{C_c^\infty(\Omega)}^{\|\cdot\|_{\ln }},$$
 where
$$\|u\|_{\ln}=\Big(c_N\int\!\!\!\!\int_{\{x,y\in\R^N: \, |x-y|<1\}}\frac{\big(u(x)-u(y)\big)^2}{|x-y|^{N}}\,dxdy +\int_{\Omega} u(x)^2\,dx\Big)^{\frac12}  \, .$$
Note that $\bH^{\ln}_0 (\Omega)$ is a Hilbert space with the inner product
$$ \langle u,v\rangle_{\ln}=c_N
\int\!\!\!\!\int_{\{x,y\in\R^N: \, |x-y|<1\}}\frac{\big(u(x)-u(y)\big)\big(v(x)-v(y)\big) }{|x-y|^{N}}\, dxdy
+\int_{\Omega} u(x)v(x)\,dx.   $$

We have the following Hardy type inequality.
\begin{proposition}
\label{lem:domain-hardy}
Let  $  \Omega\subset \R^N$ be a  bounded Lipschitz domain with $N\geq 1$ containing the origin.
For every $u\in C_c^\infty(\Omega)$, then we have
\begin{equation}\label{eq:smooth-hardy-control}
2\int_\Omega\big(\log\frac1{|x|}\big)\,u(x)^2\,dx+ d_{N,\Omega}   \norm{u}_{2}^2\leq \|u\|_{\ln}^2,
\end{equation}
where
$$d_{N,\Omega}= 2\psi\!\big(\frac N4 \big)-2\psi\!\big(\frac N2 \big) +1-\gamma-C_N |\Omega|^{\frac12}$$
and
$C_N=c_N(\int_{\R^N\setminus B_1(0)} |y|^{-2N}dy)^{\frac12}\, .$
\end{proposition}

Denote
\[
\langle u,v\rangle_{\ln,\mu} = \langle u,v\rangle_{\ln} +2\mu \int_{\Omega}  \big(\log\frac1{|x|} \big)_+u(x) v(x)\,dx+ (d_{N,\Omega})_+ \int_{\Omega}  u(x) v(x)\,dx
\]
for $u,v\in C^1_c(\Omega)$ and
its  induced norm
$$\|u\|_{\ln,\mu}=\sqrt{ \langle u,u\rangle_{\ln,\mu} } \,  , $$
where  $a_\pm=\max\{0,\pm a\}$.
Hence,
\[
\bH_0^{\ln,\mu}(\Omega)
:=
\overline{C_c^\infty(\Omega)}^{\|\cdot\|_{\ln,\mu}}
\]
 is a Hilbert space with above inner product.

\begin{theorem}
	\label{prop:energy-space}
	  Let  $  \Omega\subset \R^N$ be a  bounded Lipschitz domain with $N\geq 1$ containing the origin.
Then
		\begin{equation}\label{eq:space-identification}
			\bH_0^{\ln,\mu}(\Omega)\cong \bH_0^{\ln}(\Omega)\qquad {\rm for}\ \ \mu>-1,
			\end{equation}
	and the embedding
		\begin{equation}\label{eq:perturbed-compact}
			\mathbb H_0^{\ln,\mu}(\Omega)\hookrightarrow L^2(\Omega)
		\end{equation}
		is continuous for $\mu\geq -1$
		and compact for $\mu>-1$ and non-compact for $\mu=-1$.
\end{theorem}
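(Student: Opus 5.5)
Set $L(x)=\log\frac1{|x|}$, so $L_+=L$ near the origin. The proof is a comparison of the two quadratic forms on $C_c^\infty(\Omega)$, followed by a density argument.

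\textbf{Norm equivalence for $\mu>-1$.} For $\mu\ge 0$ the upper bound is the real issue. We need
$$2\mu\int_\Omega L_+u^2\le C\|u\|_{\ln}^2,$$
and this follows from Proposition \ref{lem:domain-hardy}. Write $L_+=L+L_-$, where $L_-\le \log \operatorname{diam}$-type constants on the bounded set $\Omega$. Then
$$2\int L_+u^2\le \|u\|_{\ln}^2+(|d_{N,\Omega}|+C)\|u\|_2^2\le C'\|u\|_{\ln}^2 .$$
The lower bound $\|u\|_{\ln}\le\|u\|_{\ln,\mu}$ is trivial in this case.

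For $-1<\mu<0$ the upper bound is trivial, and the lower bound is the convexity trick. Write
$$\|u\|_{\ln}^2+2\mu\int L_+u^2=(1+\mu)\|u\|_{\ln}^2+|\mu|\Big(\|u\|_{\ln}^2-2\int L_+u^2\Big).$$
By Proposition \ref{lem:domain-hardy} and the same bounded-$L_-$ remark, the bracket is at least $-(d_{N,\Omega})_- \|u\|_2^2-C\|u\|_2^2$. I would track the constants so that the added term $(d_{N,\Omega})_+\|u\|_2^2$ absorbs this. If it does not do so exactly, I would still get
$$\|u\|_{\ln,\mu}^2\ge(1+\mu)\|u\|_{\ln}^2-C\|u\|_2^2,$$
and combine this with $\|u\|_2\le\|u\|_{\ln}$ to argue equivalence up to constants. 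If a gap remains, I would use the interpolation/compactness inequality
$$\|u\|_2^2\le\varepsilon\|u\|_{\ln}^2+C_\varepsilon\|u\|_{L^2\text{-weak}}$$
to close it. Equivalent norms on $C_c^\infty(\Omega)$ give the same completion, which is \eqref{eq:space-identification}.

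\textbf{Continuity for $\mu\ge-1$.} At $\mu=-1$ the same computation with $(1+\mu)=0$ gives $\|u\|_{\ln,-1}^2\ge c\|u\|_2^2$. This needs a positive margin in the $L^2$ term, which I would extract from the $(d_{N,\Omega})_+$ term together with the $\int_\Omega u^2$ already contained in $\|u\|_{\ln}$. Hence $\|u\|_2\le C\|u\|_{\ln,\mu}$, and this extends by density.

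\textbf{Compactness for $\mu>-1$.} By the identification, it suffices to know that $\bH^{\ln}_0(\Omega)\hookrightarrow L^2(\Omega)$ is compact. This is known from Chen--Weth, and I would also reprove it via Fourier. The seminorm controls $\int |\hat u|^2\log(2+|\xi|)\,d\xi$ up to $\|u\|_2^2$. Combined with the uniform support in the bounded set $\Omega$, the Kolmogorov--Riesz criterion applies: high frequencies are uniformly small and translations are uniformly continuous in $L^2$.

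\textbf{Non-compactness at $\mu=-1$.} This is the main obstacle. I would build a concentrating sequence $u_k(x)=k^{N/2}\phi(kx)$ (or a logarithmically adapted profile) with $\|u_k\|_2=1$ and $u_k\rightharpoonup0$ in $L^2$. The goal is to show that $\|u_k\|_{\ln}^2-2\int L u_k^2$ stays bounded. Using the scaling law $(-\Delta)^{\ln}(u(k\cdot))=(\cL u)(k\cdot)+2\log k\,u(k\cdot)$, the energy of $u_k$ grows like $2\log k$ plus a bounded term, while $2\int L u_k^2=2\log k+2\int L\phi^2$. The logarithms cancel, so $\|u_k\|_{\ln,-1}$ is bounded. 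The truncated kernel in $\|\cdot\|_{\ln}$ differs from the full form by a term bounded by $C_N\|u\|_1\|u\|_2$, and I would control it by $C\|u_k\|_2^2$ plus an $L^1$ term that vanishes as $k\to\infty$. A bounded sequence with no $L^2$-convergent subsequence (its weak limit is $0$ while the norms equal $1$) shows the embedding is not compact.
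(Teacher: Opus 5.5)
Your treatment of $\mu\ge0$ and your reduction of compactness to $\bH_0^{\ln}(\Omega)\hookrightarrow L^2(\Omega)$ follow the paper. Your non-compactness mechanism, the exact cancellation of $2\log k$ between the energy of $u_k=k^{N/2}w(kx)$ and $2\int L u_k^2$, is the right one. It is better than the paper's Step~1, whose $\psi_n$ is identically zero because $\eta(n\cdot)$ vanishes outside $B_{1/n}(0)$ and the indicator vanishes inside it.

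The genuine gap is the $L^2$ margin, i.e.\ positivity of $\|\cdot\|_{\ln,\mu}^2$. You need it for the lower bound when $-1<\mu<0$ and for continuity at $\mu=-1$, and neither source you name supplies it.
First, $(d_{N,\Omega})_+=0$ for every $N$ and $\Omega$. By the duplication formula, $\psi(\frac N2)-\psi(\frac N4)=\frac12(\psi(\frac N4+\frac12)-\psi(\frac N4))+\log 2>\log 2$, so $d_{N,\Omega}<1-\gamma-2\log 2<0$.
Second, the $\int_\Omega u^2$ inside $\|u\|_{\ln}^2$ cannot be reused, because the right-hand side of Proposition~\ref{lem:domain-hardy} is already all of $\|u\|_{\ln}^2$.

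What you actually get at $\mu=-1$ is $\|u\|_{\ln,-1}^2\ge d_{N,\Omega}\|u\|_2^2-2\int_\Omega L_-u^2$, with a negative constant. For $-1<\mu<0$ you only get the G\aa{}rding bound $\|u\|_{\ln,\mu}^2\ge(1+\mu)\|u\|_{\ln}^2-|\mu|(|d_{N,\Omega}|+2(\log R_0)_+)\|u\|_2^2$. Using $\|u\|_2\le\|u\|_{\ln}$ closes this only when $1+\mu>|\mu|(|d_{N,\Omega}|+2(\log R_0)_+)$, which fails near $\mu=-1$. The Ehrling/compactness fallback turns a G\aa{}rding inequality into coercivity only if you already know $\|u\|_{\ln,\mu}^2>0$ for all $u\ne0$, and that is exactly what is missing. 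The paper does not supply it either. It claims $\|u\|_{\ln,\mu}^2\ge(1+\mu)\|u\|_{\ln}^2$ directly from Proposition~\ref{lem:domain-hardy}, which does not yield it. At $\mu=-1$ it uses an expression with an extra $+\|u\|_2^2$ that does not match the definition of $\|\cdot\|_{\ln,-1}$.

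Better bookkeeping will not repair this. Your cancellation uses, as the paper does, the identity $\|u\|_{\ln}^2=\cE_{\ln,0}(u,u)+(1-\rho_N)\|u\|_2^2$ for $u$ supported in $B_{1/2}(0)$. Beckner's constant is sharp, and the quotient $(\cE_{\ln,0}(u,u)-2\int Lu^2)/\|u\|_2^2$ is invariant under $u\mapsto k^{N/2}u(k\cdot)$. So near-extremals concentrated in a small ball around $0$ make $\|u\|_{\ln,-1}^2/\|u\|_2^2$ arbitrarily close to $2\psi(\frac N4)-\psi(\frac N2)+1+\gamma$. This number is $1-\pi-4\log 2$ for $N=1$, $1-4\log 2$ for $N=2$ and $\pi-1-4\log 2$ for $N=3$, all negative, and $0$ for $N=4$.

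Hence for $N\le4$ no inequality $\|u\|_2\le C\|u\|_{\ln,-1}$ holds. For $N\le3$ the form even takes negative values. Since $\|u\|_{\ln,\mu}^2\to\|u\|_{\ln,-1}^2$ as $\mu\downarrow-1$ for fixed $u$, the same is true of $\|\cdot\|_{\ln,\mu}^2$ for $\mu$ close to $-1$. Continuity at $\mu=-1$ and the identification near $\mu=-1$ therefore require a larger shift than $(d_{N,\Omega})_+$ in the definition of $\|\cdot\|_{\ln,\mu}$. With such a shift, and this normalization, your argument goes through.

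If instead you read the factor $c_N$ in $\|\cdot\|_{\ln}$ literally, the Gagliardo term is twice Chen--Weth's $\frac{c_N}{2}$ term. Then $\|u_k\|_{\ln}^2$ grows like $4\log k$, your cancellation disappears, and $\mu=-1$ is no longer the endpoint of the Hardy inequality.
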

The embedding (\ref{eq:perturbed-compact}) is compact  for $\mu>-1$ and it fails  for $\mu=-1$,
which is different from the fractional Hardy inequality where the compactness of the related embedding is valid for the extremal case. \smallskip

Now we are ready to show the eigenvalue and corresponding eigenfunction for problem (\ref{eq: 1.1}).
A function $u\in \bH_0^{\ln,\mu}(\Omega)$ is called the eigenfunction of (\ref{eq: 1.1})  corresponding to the eigenvalue  $\lambda$ if
\begin{equation}\label{weak-s}
\mathcal E_{\ln,\mu}(u,\phi)= \lambda\int_{\Omega}u \,\phi \, dx,\qquad \forall  \, \phi\in \bH_0^{\ln,\mu}(\Omega),
\end{equation}
where the Dirichlet energy functional of $\cL_\mu$ in $\Omega$ has the expression
\begin{align}\label{Euv1}
  \cE_{\ln,\mu}(u,\phi)
    =\int_{\Omega}u(x)(-\Delta)^{\ln}\phi(x)\,dx
    +2\mu \int_{\Omega} \Big(\log\frac1{|x|}\Big) u(x)\phi(x) \,dx,
\end{align}
which, we will show later, is well-defined for  $u\in \bH_0^{\ln,\mu}(\Omega)$.

We state the characterization of Dirichlet eigenvalues and corresponding eigenfunctions for the operator  $\cL_\mu$
as follows.

 \begin{theorem}\label{teo 1-m}
Assume that $N\geq 1$,  $\Omega\subset \R^N$ is a bounded  Lipschitz continuous  domain  containing the origin
and $\mu>-1$.

\begin{itemize}
\item[(i)] Then problem (\ref{eq: 1.1})   admits  an eigenvalue $\lambda^{\ln,\mu}_{1}(\Omega)>0$   characterized  by
\begin{equation}\label{Lambda-1-s}
\lambda^{\ln,\mu}_{1}(\Omega)=\inf_{\substack{0\neq u\in \bH^{\ln,\mu}_{0}(\Omega)}}\frac{\mathcal E_{\ln,\mu}(u,u)}{\|u\|^2_{2}}= \inf_{u\in \cP^{\ln,\mu}_{1}(\Omega)}\mathcal E_{\ln,\mu}(u,u),
\end{equation}
with $$\cP^{\ln,\mu}_{1}(\Omega):=\{u\in \bH^{\ln,\mu}_{0}(\Omega): \|u\|_{2}=1\},$$
 and there exists a  function $\phi^{\ln,\mu}_{1}\in \cP^{\ln,\mu}_{1}(\Omega)$ achieving the minimum of $\mathcal E_{\ln,\mu}$, i.e.
 $
 \lambda^{\ln,\mu}_{1}(\Omega)= \mathcal E_{\ln,\mu}(\phi^{\ln,\mu}_{1},\phi^{\ln,\mu}_{1})$.
Furthermore, $\phi^{\ln,\mu}_{1}>0$ a.e. in $\Omega$.

\item[(ii)]
  If $\Omega\subset B_r(0)$ with $r>0$ small enough, then $\lambda^{\ln,\mu}_{1}(\Omega)\geq0$.

\item[(iii)] Then problem (\ref{eq: 1.1})   admits a sequence of eigenvalues $\{\lambda_{m,k}(\Omega)\}_{k\in \N}$
satisfying
\[
-\infty< \lambda^{\ln,\mu}_{1}(\Omega)< \lambda^{\ln,\mu}_{2}(\Omega)\le \cdots\le \lambda^{\ln,\mu}_{k}(\Omega)\leq \cdots
\]
with corresponding eigenfunctions $\phi^{\ln,\mu}_{k}$, $k\in \N$ and
$\lim_{k\to \infty}\lambda^{\ln,\mu}_{k}(\Omega) = +\infty$.

Furthermore,  for any $k=2,3,\cdots$, the  eigenvalue $\lambda^{\ln,\mu}_{k}(\Omega)$ can be characterized as
\begin{equation}\label{Lambda-k-s}
\lambda^{\ln,\mu}_{k}(\Omega)= \inf_{u\in \cP^{\ln,\mu}_{k}(\Omega)}\cE_{\ln,\mu}(u,u),
\end{equation}
where
\[
\cP^{\ln,\mu}_{k}(\Omega):= \Big\{ u\in \bH^{\ln,\mu}_{0}(\Omega): \int_{\Omega}u\,
\phi^{\ln,\mu}_{j}dx  =0 \ \text{ for } j=1,2,\cdots k-1 \ \text{ and } \ \|u\|_{2}=1\Big\}.
\]
\item[(iv)] The sequence of eigenfunctions  $\{\phi^{\ln,\mu}_{k}\}_{k\in\N}$ corresponding to eigenvalues $\{\lambda^{\ln,\mu}_{k}(\Omega)\}$ form a complete orthonormal basis of $L^2(\Omega)$ and  an orthogonal system of $\bH^{\ln,\mu}_{0}(\Omega)$.

\end{itemize}
 \end{theorem}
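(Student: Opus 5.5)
The proof follows the standard variational scheme for self-adjoint operators with compact resolvent. Proposition \ref{lem:domain-hardy} and Theorem \ref{prop:energy-space} supply the two analytic ingredients: coercivity up to an $L^2$ shift, and compactness of the embedding for $\mu>-1$.

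\textbf{Step 1: the quadratic form.} I first check that $\cE_{\ln,\mu}(u,\phi)$ extends continuously to $\bH_0^{\ln,\mu}(\Omega)$. For $u,\phi\in C_c^\infty(\Omega)$, the Chen--Weth representation gives
\[
\int_\Omega u(-\Delta)^{\ln}\phi\,dx=\tfrac{c_N}{2}\iint_{|x-y|<1}\frac{(u(x)-u(y))(\phi(x)-\phi(y))}{|x-y|^N}\,dxdy-c_N\iint_{|x-y|\ge1}\frac{u(x)\phi(y)}{|x-y|^N}\,dxdy+\rho_N\int_\Omega u\phi\,dx .
\]
The middle term is bounded by $C|\Omega|^{1/2}\|u\|_2\|\phi\|_2$ via Cauchy--Schwarz, as in the constant $C_N$. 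The part of the potential with $\log\frac1{|x|}<0$ is bounded on $\Omega$. Hence there are constants $c_0\in(0,1]$ and $K$ such that
\[
|\cE_{\ln,\mu}(u,\phi)|\le C\|u\|_{\ln,\mu}\|\phi\|_{\ln,\mu},\qquad \cE_{\ln,\mu}(u,u)\ge c_0\|u\|_{\ln,\mu}^2-K\|u\|_2^2 .
\]
For $\mu\ge0$ the coercive bound is immediate. For $-1<\mu<0$ I would write $\mu=-(1-\epsilon)$-type and apply Proposition \ref{lem:domain-hardy} to absorb $2|\mu|\int(\log\frac1{|x|})_+u^2$ into a fraction $|\mu|<1$ of the kinetic part. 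This interpolation is essentially the content of (\ref{eq:space-identification}), so I would invoke Theorem \ref{prop:energy-space} here rather than redo it. By density, both bounds extend to the whole space.

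\textbf{Step 2: the first eigenvalue.} The coercive bound shows that the infimum in (\ref{Lambda-1-s}) is $\ge -K>-\infty$. Take a minimizing sequence in $\cP_1^{\ln,\mu}(\Omega)$. It is bounded in $\bH_0^{\ln,\mu}(\Omega)$, so a subsequence converges weakly there. By the compact embedding of Theorem \ref{prop:energy-space}, it converges strongly in $L^2$, so the limit still has unit norm. The form $\cE_{\ln,\mu}(u,u)+K\|u\|_2^2$ is a nonnegative continuous quadratic form, hence weakly lower semicontinuous, and the limit is a minimizer. The Euler--Lagrange equation, obtained by differentiating the Rayleigh quotient in direction $\phi$, gives (\ref{weak-s}).

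For positivity, I would use $|u|$. The pointwise inequality $(|u(x)|-|u(y)|)^2\le(u(x)-u(y))^2$ handles the local part. The far-field term $-c_N\iint u(x)u(y)|x-y|^{-N}$ does not increase when $u$ is replaced by $|u|$, because its kernel is positive and enters with a minus sign. Hence $|\phi_1|$ is also a minimizer. Strict positivity a.e. then comes from a strong maximum principle argument. If $u\ge0$ is a minimizer vanishing on a set $Z$ of positive measure, then testing the equation against $\phi\ge0$ supported near $Z$ gives $\int_Z\phi(-\Delta)^{\ln}u\le -c\iint_{Z\times\{u>0\}}\dots<0$ when $|x-y|<1$, which contradicts $\lambda u=0$ on $Z$. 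The set $\{u>0\}$ cannot be far from $Z$ everywhere because $\Omega$ is connected. The same argument, applied to $u_\pm$ of any minimizer, gives simplicity, and therefore the strict inequality $\lambda_1<\lambda_2$. I read the "$>0$" in (i) as referring to $\phi_1$; the paper's claim $\lambda_1>0$ cannot hold in general in view of (ii) and the scaling remark.

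\textbf{Step 3: part (ii).} When $\Omega\subset B_r$ with $r$ small, Proposition \ref{lem:domain-hardy} together with $\log\frac1{|x|}\ge\log\frac1r$ gives
\[
\cE_{\ln,\mu}(u,u)\ge\big(2(1+\mu)\log\tfrac1r+d_{N,\Omega}-C\big)\|u\|_2^2 ,
\]
after discarding the nonnegative kinetic part in the case $\mu<0$. This is nonnegative for small $r$, since $d_{N,\Omega}$ stays bounded as $|\Omega|\to0$.

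\textbf{Step 4: higher eigenvalues and completeness.} I would proceed inductively on the closed subspaces $\cP_k^{\ln,\mu}(\Omega)$, repeating the compactness argument of Step 2. Each minimizer satisfies (\ref{weak-s}) for test functions orthogonal to $\phi_1,\dots,\phi_{k-1}$. The equation extends to those $\phi_j$ by the symmetry of $\cE_{\ln,\mu}$, since $\cE_{\ln,\mu}(\phi_k,\phi_j)=\lambda_j\int\phi_k\phi_j=0$. This gives the sequence in (iii) together with orthogonality in both the $L^2$ and the energy sense. To show $\lambda_k\to\infty$, suppose instead that the $\lambda_k$ were bounded. Then $\{\phi_k\}$ would be bounded in $\bH_0^{\ln,\mu}(\Omega)$, hence precompact in $L^2$. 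This contradicts $L^2$-orthonormality, since $\|\phi_k-\phi_j\|_2^2=2$.

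For completeness, let $w\in \bH_0^{\ln,\mu}(\Omega)$ be $L^2$-orthogonal to all $\phi_k$ with $\|w\|_2=1$. Then $w\in\cP_k$ for every $k$, so $\cE(w,w)\ge\lambda_k\to\infty$, which is absurd. Density of $\bH_0^{\ln,\mu}(\Omega)$ in $L^2(\Omega)$ finishes (iv).

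The main obstacle is Step 1 in the range $-1<\mu<0$, where the Hardy term has to be absorbed with a uniform constant; Theorem \ref{prop:energy-space} is what makes this work. The second delicate point is the a.e. positivity in Step 2, since the logarithmic Laplacian has a weak kernel and no standard Harnack inequality is available.
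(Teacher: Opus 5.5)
\textbf{Gap in the completeness part of (iv).} Most of your route is the paper's: coercivity modulo $L^2$, the compact embedding of Theorem~\ref{prop:energy-space}, direct minimization, the $|u|$ trick, induction on $L^2$-orthogonal complements, divergence of $\lambda_k$ via orthonormality, and (ii) via the Hardy inequality. You are right that $\lambda_1^{\ln,\mu}(\Omega)>0$ cannot hold in general, since Lemma~\ref{lem:scaling-eigenvalue} sends it to $-\infty$ under dilation. Your simplicity argument via $u_\pm$ is also sounder than the paper's, which infers that $u$ has constant sign merely from $|u|>0$ a.e.

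The gap is this. You show that no nonzero $w\in\bH_0^{\ln,\mu}(\Omega)$ is $L^2$-orthogonal to all $\phi_k$, and then cite density of $\bH_0^{\ln,\mu}(\Omega)$ in $L^2(\Omega)$. That does not prove completeness: a dense subspace can meet a nontrivial closed subspace (here $\{\phi_k\}^{\perp}$) only in $\{0\}$. What you need is $\bH_0^{\ln,\mu}(\Omega)\subset M:=\overline{\operatorname{span}\{\phi_k\}}^{\|\cdot\|_2}$, and this requires the energy identity. For $w\in\bH_0^{\ln,\mu}(\Omega)$ set $w_k:=w-\sum_{j<k}\langle w,\phi_j\rangle_2\phi_j\in\bH_{\mu,k}(\Omega)$. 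Then
\[
\lambda_k\|w_k\|_2^2\le\cE_{\ln,\mu}(w_k,w_k)=\cE_{\ln,\mu}(w,w)-\sum_{j<k}\lambda_j\langle w,\phi_j\rangle_2^2\le\cE_{\ln,\mu}(w,w)+(\lambda_1)_-\|w\|_2^2 .
\]
Since $\lambda_k\to+\infty$, this gives $\|w_k\|_2\to0$, i.e.\ $w\in M$. Only then does density give $M=L^2(\Omega)$. The paper does the same computation by contradiction: it approximates $f\perp\{\phi_k\}$ by $w$ with $\|w-f\|_2<\varepsilon$, so that $\|w_k\|_2$ stays bounded away from $0$.

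\textbf{Two further steps do not work as written.} The paper is no more careful at either.

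First, strict positivity. Testing with $\varphi\ge0$ ``supported near $Z$'' gives a contradiction only if $\varphi=0$ a.e.\ on $\{u>0\}$. Nothing guarantees that a nonzero $\varphi\in\bH_0^{\ln}(\Omega)$ vanishing off an arbitrary measurable $Z$ exists. Its energy dominates a multiple of $\int_Z\varphi(x)^2\int_{B_1(x)\setminus Z}|x-y|^{-N}\,dy\,dx$, and the inner integral may be infinite on $Z$. Moreover, $\int_Z\varphi\,(-\Delta)^{\ln}u$ has no meaning for a weak solution. Your mechanism is still the right one, and it becomes rigorous through irreducibility. For $u\ge0$ and $0<|A|<|\Omega|$,
\[
\cE_{\ln,\mu}\big(\mathbf{1}_Au,\mathbf{1}_{\Omega\setminus A}u\big)=-c_N\iint_{\R^N\times\R^N}\frac{\mathbf{1}_A(x)u(x)\,\mathbf{1}_{\Omega\setminus A}(y)u(y)}{|x-y|^N}\,dx\,dy ,
\]
which is negative for suitable $u$ whenever $\mathbf{1}_Au$ lies in the form domain. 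By Ouhabaz's criterion, no ideal $L^2(A)$ is then invariant. The semigroup is positivity preserving by your $|u|$ computation (Beurling--Deny). Hence it is positivity improving, and Perron--Frobenius gives $\phi_1>0$ a.e. Because the kernel is positive at every distance, connectedness of $\Omega$ is not needed.

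Second, coercivity in Step~1. You correctly put $\frac{c_N}{2}$ in front of the local term, but $\|u\|_{\ln}$ in Proposition~\ref{lem:domain-hardy} carries $c_N$. Absorbing $2|\mu|\int_\Omega(\log\frac1{|x|})_+u^2$ via that proposition therefore works only for $\mu>-\frac12$. Theorem~\ref{prop:energy-space} does not help: it is a norm equivalence, not a lower bound for $\cE_{\ln,\mu}$. Use Beckner's inequality \eqref{eq:whole-space-hardy} directly. For $\mu\in(-1,0)$,
\[
\begin{aligned}
\cE_{\ln,\mu}(u,u)&=(1+\mu)\,\cE_{\ln,0}(u,u)+|\mu|\Big(\cE_{\ln,0}(u,u)-2\int_\Omega\Big(\log\frac1{|x|}\Big)u^2\,dx\Big)\\
&\ge(1+\mu)\,\cE_{\ln,0}(u,u)+2|\mu|\Big(\psi\big(\tfrac N4\big)+\log 2\Big)\|u\|_2^2 .
\end{aligned}
\]
The same inequality also gives Step~3, with the factor $2(1+\mu)$, for every $\mu>-1$.
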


Theorem \ref{teo 1-m} establishes a complete variational spectral
theory for $\cL_\mu$ with $\mu>-1$, where the spectrum is discrete, the
eigenvalues are given by successive minimizations over
$L^2$-orthogonal complements, and the eigenfunctions form a complete
orthonormal basis of $L^2(\Omega)$. In particular, the first eigenvalue
may be negative due to the logarithmic Hardy potential, which motivates
the scaling analysis below.


\begin{theorem}\label{teo 1.4}
Assume that  $N\geq 1$, $\Omega\subset \R^N$ is a bounded  Lipschitz continuous  domain  containing the origin
and $\mu>-1$. Then

$(i)$ lower bound for the first eigenvalue:
\begin{equation}\label{lower-bound-1}
  \lambda_1^{\ln,\mu}(\Omega)
  \;\geq\;
  2\Big(\psi\!\Big(\frac N4\Big)+\log 2\Big)
  -2(1+\mu)\log R_0,
\end{equation}
where $R_0=\sup_{x\in \Omega}|x|$;

$(ii)$  for every $k\in\N$ and $l>0$,
\begin{equation}\label{scale-1}
  \lambda_k^{\ln,\mu}(\Omega_l)
  =
  \lambda_k^{\ln,\mu}(\Omega)
  -2(1+\mu)\log l,
\end{equation}
where $\Omega_l=l\,\Omega=\{lx:x\in\Omega\}.$


\end{theorem}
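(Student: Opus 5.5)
Part (ii) comes first, since part (i) reduces to it plus a sharp inequality on a ball.

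\emph{Scaling.} For $u\in C_c^\infty(\Omega)$ set $u_l(x)=u(x/l)$, which belongs to $C_c^\infty(\Omega_l)$. The symbol of $(-\Delta)^{\ln}$ is $2\ln|\xi|$, and $\widehat{u_l}(\xi)=l^N\hat u(l\xi)$. Hence $(-\Delta)^{\ln}u_l(x)=\big[(-\Delta)^{\ln}u\big](x/l)-2\log l\,u(x/l)$. Equivalently, we can differentiate $(-\Delta)^s u_l=l^{-2s}[(-\Delta)^s u](\cdot/l)$ at $s=0$. Also
\[
\log\frac1{|x|}=\log\frac1{|x/l|}-\log l .
\]
Changing variables $x=ly$ then gives
\[
\cE_{\ln,\mu}(u_l,u_l)=l^N\Big(\cE_{\ln,\mu}(u,u)-2(1+\mu)\log l\,\|u\|_2^2\Big),\qquad \|u_l\|_2^2=l^N\|u\|_2^2 .
\]
The map $u\mapsto u_l$ is a bijection from $\bH_0^{\ln,\mu}(\Omega)$ onto $\bH_0^{\ln,\mu}(\Omega_l)$; it is bounded in both directions, because the norms are equivalent to $\|\cdot\|_{\ln}$ by Theorem \ref{prop:energy-space} and dilation preserves the truncated Gagliardo-type seminorm up to constants. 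The map preserves $L^2$-orthogonality up to the factor $l^N$. It sends eigenfunctions to eigenfunctions, with $\phi_k$ mapped to a multiple of $(\phi_k)_l$. Applying this inductively to the successive min characterizations \eqref{Lambda-1-s} and \eqref{Lambda-k-s} from Theorem \ref{teo 1-m} yields \eqref{scale-1} for every $k$. The constraint sets $\cP_k$ correspond because the orthogonality conditions transform into orthogonality against the rescaled eigenfunctions.

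\emph{Lower bound.} By (ii) with $l=R_0$, we have $\lambda_1^{\ln,\mu}(\Omega)=\lambda_1^{\ln,\mu}(\Omega')-2(1+\mu)\log R_0$, where $\Omega'=R_0^{-1}\Omega\subset B_1(0)$. It therefore suffices to show
\[
\cE_{\ln,\mu}(u,u)\ge 2\big(\psi(\tfrac N4)+\log2\big)\|u\|_2^2 \qquad\text{for } u\in C_c^\infty(B_1).
\]
On $B_1$ we have $\log\frac1{|x|}\ge0$, so for $\mu\ge-1$,
\[
\cE_{\ln,\mu}(u,u)\ge \int u(-\Delta)^{\ln}u\,dx-2\int\log\tfrac1{|x|}\,u^2\,dx .
\]
The problem thus reduces to the $\mu=-1$ case, that is, to a sharp logarithmic Hardy inequality on $\R^N$:
\[
\int_{\R^N}2\ln|\xi|\,|\hat u|^2\,d\xi+2\int_{\R^N}\log|x|\,u^2\,dx\ \ge\ 2\big(\psi(\tfrac N4)+\log2\big)\|u\|_2^2 .
\]
This is Beckner's logarithmic uncertainty principle (equivalently, the derivative at $s=0$ of the fractional Pitt inequality). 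Pitt's inequality states
\[
\int|\xi|^{2s}|\hat u|^2\ge C_s\int|x|^{-2s}|u|^2,\qquad C_s=2^{2s}\Gamma^2\big(\tfrac{N+2s}4\big)/\Gamma^2\big(\tfrac{N-2s}4\big),\qquad C_0=1 .
\]
Both sides equal $\|u\|_2^2$ at $s=0$, so differentiating the difference at $s=0^+$ gives the inequality with constant
\[
\frac{d}{ds}\log C_s\Big|_{s=0}=2\log2+\psi(\tfrac N4).
\]
This differentiation is legitimate for $u\in C_c^\infty$ with a nonnegative difference that vanishes at $s=0$. Here the normalization of the Fourier transform and the symbol factor $2$ must be checked carefully. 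Doing so, the constant becomes $2(\psi(\frac N4)+\log2)$, matching \eqref{lower-bound-1}. Presumably this is also the origin of the $\psi(\frac N4)$ term in Proposition \ref{lem:domain-hardy}. Finally, density of $C_c^\infty(\Omega)$ in $\bH_0^{\ln,\mu}(\Omega)$ passes the bound to the infimum.

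The main obstacle is this last step: obtaining the sharp logarithmic Pitt constant with the correct normalization. I would first try the derivative-of-Pitt argument as outlined, controlling the $s\to0$ limit via dominated convergence for Schwartz functions.
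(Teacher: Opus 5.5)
Your proposal is correct and follows essentially the paper's route. For part (ii) you use the Fourier dilation identity for the energy; the paper uses the $L^2$-normalized dilation $l^{-N/2}u(x/l)$, which is the same computation, and your bijection argument for the constraint sets fills in the ``similar arguments'' the paper leaves for $k\ge2$. For part (i) you use Beckner's logarithmic uncertainty inequality together with $\log\frac1{|x|}\ge-\log R_0$ and $1+\mu>0$, which the paper applies directly on $\Omega$ rather than first rescaling to $R_0^{-1}\Omega\subset B_1(0)$.

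In the step you flagged there is a small arithmetic slip. With your $C_s$ one gets $\frac{d}{ds}\log C_s\big|_{s=0}=2\log2+2\psi(\frac N4)$, not $2\log2+\psi(\frac N4)$. This already equals the required constant $2(\psi(\frac N4)+\log2)$, so no further correction for the symbol factor is needed. The paper simply cites Beckner's inequality and converts the Fourier normalization.
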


From Theorem~\ref{teo 1.4}, part $(i)$ establishes a uniform lower bound for the eigenvalues of $\mathcal{L}_\mu$, in sharp contrast to the behavior of the fractional Laplacian $(-\Delta)^s$ with $s \in (0,1)$. Indeed, for $(-\Delta)^s$, the eigenvalues on scaled domains obey the scaling law
\[
  \lambda_k((-\Delta)^s,\Omega_l) = l^{-2s}\lambda_k((-\Delta)^s,\Omega),
\]
and thus converge to $0^+$ as $l \to +\infty$, indicating that eigenvalues of large domains shrink toward zero from above. In contrast, part $(ii)$ reveals a fundamentally different asymptotic regime: for all eigenvalues of both $\mathcal{L}_\mu$ and its Hardy perturbation (with $\mu > -1$), scaling the domain leads to eigenvalues diverging to $-\infty$, quantified precisely by
\[
  \lim_{l \to +\infty} \frac{\lambda_k^{\ln,\mu}(\Omega_l)}{\log l} = -2(1+\mu).
\]

The paper is organized as follows. Section 2 contains the preliminary
results on the logarithmic Hardy inequality, the identification of the
energy spaces, and the compactness of the embedding
$\bH_0^{\ln,\mu}(\Omega)\hookrightarrow L^2(\Omega)$ for $\mu>-1$,
together with its failure at the endpoint $\mu=-1$. In Section 3 we
construct the Dirichlet eigenvalues and eigenfunctions of
\eqref{eq: 1.1} by variational methods and then establish their main
spectral properties. Section 4 is devoted to further properties of the
eigenvalues, including a uniform lower bound for
$\lambda_1^{\ln,\mu}(\Omega)$ and the scaling property
  for dilated domains.  

\setcounter{equation}{0}
\section{Preliminary}

This section is devoted to introduce the logarithmic Hardy inequality in Proposition \ref{lem:domain-hardy},
the identification of energy spaces  and the embedding in Theorem \ref{prop:energy-space}.
To be convenience for the analysis, we denote
 $a_\pm=\max\{0,\pm a\}$, $R_0=\sup\{|x|: x\in\Omega\}$,
$$C_c^\infty(\Omega)=\{u\in C^\infty(\R^N):\, {\rm supp}(u)\subset \Omega \}. $$
and
$$\langle u,v\rangle_2=\int_{\Omega} u(x)v(x)dx,\qquad \norm{u}_2=\Big(\int_\Omega u(x)^2dx\Big)^{\frac12}.$$

\subsection{Logarithmic Hardy inequality }

In this subsection, we prove the logarithmic Hardy inequality in Proposition \ref{lem:domain-hardy}.

\begin{proof}[{\bf Proof of Proposition \ref{lem:domain-hardy}}]
We first recall  the Beckner's logarithmic uncertainty
inequality from \cite{Beckner1995}:
\begin{equation}\label{eq:beckner}
	\int_{\R^N}(\log|x|)\,|f(x)|^2\,dx
	+\int_{\R^N}(\log|\eta|)\,
	|\widehat f_B(\eta)|^2\,d\eta
	\geq
	\Big[\psi\!\big(\frac N4\big)-\log\pi\Big]
	\norm{f}_2^2,
\end{equation}
  where the Fourier transform convention has the form
\[
\widehat f_B(\eta)=\int_{\R^N}e^{2\pi {\bf i} x\cdot\eta}f(x)\,dx\quad \text{for $f\in C_c^\infty(\R^N)$}
\]
with  ${\bf i}^2=-1$.

Now we take a uniform of   Fourier transformation used in \cite{ChenWeth2019}:
\[
  \widehat f(\xi)=(2\pi)^{-N/2}\int_{\R^N}e^{-{\bf i}  x\cdot\xi}f(x)\,dx
  =(2\pi)^{-N/2}\widehat f_B\!\big(-\frac{\xi}{2\pi}\big).
\]
Using $\xi=-2\pi\eta$ and Plancherel's identity, it holds that
\begin{align*}
  \int_{\R^N}(\log|\xi|)
|\widehat f(\xi)|^2\,d\xi
  &=\int_{\R^N}\log(2\pi|\eta|)
|\widehat f_B(\eta)|^2\,d\eta\\[2mm]
  &=\int_{\R^N}\log|\eta|\,
|\widehat f_B(\eta)|^2\,d\eta
  +\log(2\pi)\norm{f}_2^2.
\end{align*}
Substituting this identity into \eqref{eq:beckner} yields
\begin{equation}\label{eq:whole-space-hardy}
  \int_{\R^N}\big(\log\frac1{|x|}\big)\,|f(x)|^2\,dx
  \leq
  \int_{\R^N}(\log|\xi|)\,
  |\widehat f(\xi)|^2\,d\xi
  -\Big(\psi\!\big(\frac N4\big)+\log2\Big)
  \norm{f}_2^2.
\end{equation}

Note that
\begin{align*}
&\quad \ \Big|c_N\int\!\!\!\!\int_{\{x,y\in\R^N: \, |x-y|<1\}}\frac{u(x)u(y)}{|x-y|^N}\,dx\,dy\Big|
\\[2mm]& \leq c_N\int_{\Omega} |u(x)|\big( \int_{\{y\in \Omega: \, |x-y|\geq 1\}} \frac{ |u(y)|}{|x-y|^N}\, dy\big)dx
\\[2mm]& \leq c_N \int_{\Omega} |u(x)|dx \Big (\int_{\Omega}u(y)^2dy\Big)^{\frac12}
\Big(\int_{\{\R^N\setminus B_1(0)}\frac1{|y|^{2N}}dy\Big)^{\frac12}
\\[2mm]& \leq C_N |\Omega|^{\frac12} \norm{u}_2^2,
\end{align*}
where $C_N=c_N\big(\int_{\{\R^N\setminus B_1(0)}\frac1{|y|^{2N}}dy)^{\frac12}>0$.

By (\ref{Euv1}) with $\mu=0$, note that
$$
2\int_{\R^N}(\log|\xi|)\,
|\widehat{  u}(\xi)|^2\,d\xi
=\cE_{\ln,0}(u,u),
$$
then we have that
\begin{align*}
2\int_\Omega\big(\log\frac1{|x|}\big)\,u(x)^2\,dx
&\leq
\cE_{\ln,0} (u,u) -2\Big(\psi\!\big(\frac N4 \big)+\log2\Big)\norm{u}_2^2
\\[2mm]&=\|u\|_{\ln}^2
    +c_N\int\!\!\!\int_{|x-y|\geq 1}\frac{u(x)u(y)}{|x-y|^N}\,dx\,dy
    \\[2mm]&\qquad
    +\Big( \rho_N-1 -2\psi\!\big(\frac N4\big)-2\log2\Big) \int_{\Omega} u(x)^2dx
    \\[2mm]&  \leq \|u\|_{\ln}^2 +\Big( \rho_N-1+C_N|\Omega|^{\frac12} -2\psi\!\big(\frac N4 \big)-2\log2\Big) \int_{\Omega} u(x)^2dx,
\end{align*}
 which implies  (\ref{eq:smooth-hardy-control}).
\end{proof}

\subsection{Embedding property for Hilbert space}

In this section, we show the proof about the identification of energy spaces  and the embedding
in Theorem \ref{prop:energy-space}.

\begin{proof}[{\bf Proof of Theorem \ref{prop:energy-space}}]
When $\mu\geq0$, the logarithmic Hardy inequality \eqref{eq:smooth-hardy-control} shows that
\begin{align*}
  \|u\|_{\ln,\mu}\geq \|u\|_{\ln}
\end{align*}
and by Proposition \ref{lem:domain-hardy}, it yields that
\begin{align*}
  \|u\|_{\ln,\mu}^2
  &= \|u\|_{\ln}^2+2\mu\int_{\Omega}\Big((\log\frac1{|x|})_+ u(x)^2\Big) + (d_{N,\Omega})_+ \|u\|_2^2
  \\[2mm] &\leq  (1+\mu) \|u\|_{\ln}^2+2(d_{N,\Omega})_+  \|u\|_2^2+ 2\mu\int_{\Omega\setminus B_1(0)} (\log {|x|})  u(x)^2dx
 \\[2mm] &\leq  (1+\mu) \|u\|_{\ln}^2+\big( 2(d_{N,\Omega})_++\log (1+R_0) \big) \|u\|_2^2
 \\[2mm] &\leq \max\big\{1+\mu,\, 2(d_{N,\Omega})_++ \log (1+R_0)  \big\} \|u\|_{\ln}^2,
\end{align*}
 where we recall $R_0=\sup\{|x|: x\in\Omega\}$.

When $\mu\in(-1,0)$, the logarithmic Hardy inequality \eqref{eq:smooth-hardy-control} gives
\begin{align*}
  \|u\|_{\ln,\mu}\leq \|u\|_{\ln}
\end{align*}
and by Proposition \ref{lem:domain-hardy}, it implies that
\begin{align*}
  \|u\|_{\ln,\mu}^2
  &= \|u\|_{\ln}^2+2\mu\int_{\Omega}\Big((\log\frac1{|x|})_+ u(x)^2\Big) + (d_{N,\Omega})_+ \|u\|_2^2
 \\[2mm] &\geq  (1+\mu) \|u\|_{\ln}^2.
\end{align*}

As a consequence, we obtain that for $\mu>-1$,
$$\bH^{\ln,\mu}_0(\Omega)\cong\bH^{\ln,\mu}_0(\Omega),  $$
since equivalent norms generate the same Cauchy sequences on
$C_c^\infty(\Omega)$, more the properties of $\bH^{\ln,\mu}_0(\Omega)$ could see
\cite[Theorem 3.1]{ChenWeth2019}
and  the embedding
$\mathbb H_0^{\ln}(\Omega)\hookrightarrow L^2(\Omega)$
 is compact by \cite[Theorem 2.1]{CD18}.
 Hence,  \eqref{eq:perturbed-compact} holds.

 When $\mu=-1$, it follows by Proposition \ref{lem:domain-hardy} that
 \begin{align*}
  \|u\|_{\ln,-1}^2
  &= \|u\|_{\ln}^2-\int_{\Omega}\Big((\log\frac1{|x|})_+ u(x)^2\Big) +\big(1+(d_{N,\Omega})_+\big) \|u\|_2^2
 \geq  \|u\|_2^2,
\end{align*}
 which imlpies that
 $$\bH^{\ln,\mu}_0(\Omega)\hookrightarrow  L^2(\Omega).  $$

\smallskip

Now we are position to prove that the embedding is not compact for $\mu=-1$.
We sperate the proof of this result into four steps.

\textit{Step 1: construction of a normalized sequence.}
Choose a radial cut-off function \(\eta\in C_c^\infty(\mathbb R^N)\)
such that
\[
  0\le\eta\le1,\qquad
  \eta=1\ \text{in }B_{\frac12}(0),\qquad
  \operatorname{supp}\eta\subset B_1(0).
\]
Since \(0\in\Omega\) and \(\Omega\) is open, there exists
\(r_0>0\) such that \(B_{r_0}(0)\subset\Omega\). After scaling, we may
assume \(B_1(0)\subset\Omega\). For \(n\ge2\), let us define
\[
  \psi_n(x)
  :=
  \eta(nx)\,
  \Bigl(\log\frac1{|x|}\Bigr)^{\frac12}
  \mathbf 1_{\{|x|>\frac1n\}}(x),
  \qquad x\in\mathbb R^N\setminus\{0\}.
\]
Since \(\eta(nx)\) is supported in \(B_{\frac1n}(0)\) and the indicator
\(\mathbf 1_{\{|x|>\frac1n\}}\) vanishes in \(B_{\frac1n}(0)\), the function
\(\psi_n\) is supported in the annulus
\[
 \Big \{\frac1n\le |x|\le\frac2n\Big\}
\]
up to the support of \(\eta(nx)\). More precisely,
\[
  \operatorname{supp}\psi_n
  \subset
 \Big \{\frac1n\le |x|\le \frac2n\Big\}
  \subset B_1(0)\subset\Omega
\]
for \(n\) large. Hence \(\psi_n\in L^2(\Omega)\).

Let
\(\rho\in C_c^\infty(\mathbb R^N)\) be a standard mollifier with
$\int_{\R^N}\rho(x)dx=1$, and set \(\rho_\varepsilon(x)=\varepsilon^{-N}\rho(\frac x{\varepsilon})\).
For \(\varepsilon_n:=n^{-2}\), let us define
\[
  \widetilde\psi_n:=\rho_{\varepsilon_n}*\psi_n.
\]
Then \(\widetilde\psi_n\in C_c^\infty(\Omega)\) for \(n\) large.
Moreover, since \(\psi_n\) is supported away from \(\partial\Omega\)
and away from \(0\), the regularization preserves the relevant
\(L^2\) and logarithmic energy bounds up to a factor \(1+o(1)\) as
\(n\to\infty\). Finally, set
\[
  u_n:=\frac{\widetilde\psi_n}{\|\widetilde\psi_n\|_2}.
\]
Thus,  \(u_n\in C_c^\infty(\Omega)\) and \(\|u_n\|_2=1\).

\medskip
\textit{Step 2: uniform bound in \(\mathbb H_0^{\ln,-1}(\Omega)\).}
We estimate the logarithmic energy
\[
  \|u_n\|_{\ln,-1}^2
  =c_N
  \iint_{|x-y|<1}
  \frac{(u_n(x)-u_n(y))^2}{|x-y|^N}\,dx\,dy
  +\int_\Omega u_n(x)^2\,dx
  -\int_\Omega \Bigl(\log\frac1{|x|}\Bigr)_+u_n(x)^2\,dx.
\]
Since \(\|u_n\|_2=1\), the \(L^2\)-term is \(1\). It suffices to
bound the double integral uniformly in \(n\).

For the unregularized function \(\psi_n\), a direct computation using
polar coordinates gives
\[
  \iint_{|x-y|<1}
  \frac{(\psi_n(x)-\psi_n(y))^2}{|x-y|^N}\,dx\,dy
  \le
  C \log n,
\]
and
\[
  \|\psi_n\|_2^2
  \asymp
  \frac{1}{\log n}.
\]
Indeed, the factor \((\log(1/|x|))^{1/2}\) concentrates near
\(|x|\sim1/n\), where \(\log(1/|x|)\sim\log n\), and the normalization
\(\|\psi_n\|_2^2\asymp(\log n)^{-1}\) follows from
\[
  \int_{\frac1n}^{\frac2n}
  \Bigl(\log\frac1r\Bigr)r^{N-1}\,dr
  \asymp
  n^{-N}\log n.
\]
Consequently,
\[
  \frac{
    \displaystyle
    \iint_{|x-y|<1}
    \frac{(\psi_n(x)-\psi_n(y))^2}{|x-y|^N}\,dx\,dy
  }{\|\psi_n\|_2^2}
  \le
  C,
\]
uniformly in \(n\). Since mollification preserves these bounds up to
a factor \(1+o(1)\), we obtain
\[
  \sup_{n\ge2}\|u_n\|_{\ln,-1}^2<+\infty.
\]
Thus, \((u_n)\) is bounded in \(\mathbb H_0^{\ln,-1}(\Omega)\).

\medskip

\textit{Step 3: weak convergence to zero.}
By the Banach--Alaoglu theorem, after passing to a subsequence,
\[
  u_n\rightharpoonup u
  \qquad\text{in }\ \mathbb H_0^{\ln,-1}(\Omega)
\]
for some \(u\in\mathbb H_0^{\ln,-1}(\Omega)\). Since
\(\mathbb H_0^{\ln,-1}(\Omega)\hookrightarrow L^2(\Omega)\)
continuously, we also have
\[
  u_n\rightharpoonup u
  \qquad\text{in } \ L^2(\Omega).
\]

We claim that \(u=0\). Indeed, we note that
\[
  \operatorname{supp}\psi_n
  \subset
  \Big\{\frac1n\le |x|\le \frac2n\Big\}.
\]
Hence for every fixed \(\varphi\in C_c^\infty(\Omega\setminus\{0\})\),
there exists \(n_0\) such that
\[
  \operatorname{supp}\varphi\cap\operatorname{supp}\psi_n=\varnothing
  \qquad\text{for all }n\ge n_0.
\]
Therefore
\[
  \int_\Omega \widetilde\psi_n(x)\varphi(x)\,dx=0
  \qquad\text{for all }n\ge n_0,
\]
and consequently
\[
  \int_\Omega u_n(x)\varphi(x)\,dx=0
  \qquad\text{for all }n\ge n_0.
\]
Letting \(n\to\infty\),
\[
  \int_\Omega u(x)\varphi(x)\,dx=0
\]
for every \(\varphi\in C_c^\infty(\Omega\setminus\{0\})\).
Since \(C_c^\infty(\Omega\setminus\{0\})\) is dense in \(L^2(\Omega)\)
for \(N\ge1\), we conclude \(u=0\). Thus
\[
  u_n\rightharpoonup 0
  \qquad\text{in }\mathbb H_0^{\ln,-1}(\Omega)
  \text{ and in }L^2(\Omega)\ \  {\rm as} \ \ n\to+\infty.
\]

\medskip

\textit{Step 4: no strongly convergent subsequence.}
Suppose, for contradiction, that \((u_n)\) has a subsequence
converging strongly in \(L^2(\Omega)\). Since \(u_n\rightharpoonup0\)
in \(L^2(\Omega)\), the strong limit must be \(0\). But
\(\|u_n\|_2=1\) for all \(n\), hence
\[
  \|u_n-0\|_2=1
  \qquad\text{for all }n,
\]
which contradicts strong convergence to \(0\). Therefore no
subsequence of \((u_n)\) converges strongly in \(L^2(\Omega)\).

Combining Steps 1--4, the embedding
\[
  \mathbb H_0^{\ln,-1}(\Omega)\hookrightarrow L^2(\Omega)
\]
is not compact.
\end{proof}


\section{Dirichlet eigenvalues}


This Section is devoted to
construct the Dirichlet eigenvalues and eigenfunctions of
\eqref{eq: 1.1} by variational methods and then establish their main
spectral properties when $\mu>-1$.

\begin{lemma}
\label{lm 3.1}
Let $\mu>-1$ and define
\begin{equation}\label{eq:first-eigenvalue}
  \lambda^{\ln,\mu}_{1}
  :=\inf_{\substack{u\in\bH_0^{\ln,\mu}(\Omega)\\
                    \norm{u}_2=1}}
  \cE_{\ln,\mu}(u,u).
\end{equation}
Then $\lambda^{\ln,\mu}_{1}\in\R$, and the infimum is attained by a function
${\phi}^{\ln,\mu}_{1}\in\bH_0^{\ln,\mu}(\Omega)$ with
$\norm{{\phi}^{\ln,\mu}_{1}}_2=1$. Moreover,
\begin{equation}\label{eq:first-weak-equation}
  \cE_{\ln,\mu}({\phi}^{\ln,\mu}_{1},\varphi)
  =\lambda^{\ln,\mu}_{1}
 \langle {\phi}^{\ln,\mu}_{1},\varphi\rangle_{2},
  \qquad\forall\, \varphi\in\mathbb H^{\ln,\mu}_0(\Omega).
\end{equation}

Moreover,   without loss of generality, we can have that
\[
  \phi_1^{\ln,\mu}> 0
  \quad\text{a.e. in }\Omega.
\]

\end{lemma}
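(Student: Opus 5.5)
The plan is the direct method of the calculus of variations in $\bH_0^{\ln,\mu}(\Omega)$, combined with the compact embedding from Theorem \ref{prop:energy-space}.

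\textbf{Step 1: a G\aa rding-type identity.} For $u\in C_c^\infty(\Omega)$ I first relate $\cE_{\ln,\mu}(u,u)$ to $\|u\|_{\ln,\mu}^2$. The computation in the proof of Proposition \ref{lem:domain-hardy} shows
\[
\cE_{\ln,0}(u,u)=\|u\|_{\ln}^2-c_N\iint_{|x-y|\ge1}\frac{u(x)u(y)}{|x-y|^N}\,dx\,dy+(\rho_N-1)\|u\|_2^2 .
\]
The middle term is bounded by $C_N|\Omega|^{1/2}\|u\|_2^2$. On $\Omega$ the negative part $(\log\frac1{|x|})_-$ is bounded by $\log(1+R_0)$. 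Together these give
\[
\cE_{\ln,\mu}(u,u)=\|u\|_{\ln,\mu}^2+Q(u),\qquad |Q(u)|\le K\|u\|_2^2,
\]
with $K$ depending only on $N,\Omega,\mu$. By density this identity extends to all of $\bH_0^{\ln,\mu}(\Omega)$, which also gives the well-definedness of the form there.

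\textbf{Step 2: finiteness and existence of a minimizer.} From Step 1 we get $\cE_{\ln,\mu}(u,u)\ge \|u\|_{\ln,\mu}^2-K\ge -K$ whenever $\|u\|_2=1$, so $\lambda_1^{\ln,\mu}>-\infty$. Testing with any $C_c^\infty$ function shows $\lambda_1^{\ln,\mu}<\infty$. Now take a minimizing sequence $(u_n)$. Step 1 gives $\|u_n\|_{\ln,\mu}^2\le \cE_{\ln,\mu}(u_n,u_n)+K$, so the sequence is bounded in the Hilbert space $\bH_0^{\ln,\mu}(\Omega)$. Passing to a subsequence, $u_n\rightharpoonup u$ weakly. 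Because $\mu>-1$, Theorem \ref{prop:energy-space} gives compactness of the embedding, so $u_n\to u$ strongly in $L^2$ and $\|u\|_2=1$.

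\textbf{Step 3: lower semicontinuity.} This is the main obstacle. The norm term $\|u\|_{\ln,\mu}^2$ is weakly lower semicontinuous, so it suffices to show $Q(u_n)\to Q(u)$. The quadratic form $Q$ consists of three pieces:
\begin{itemize}
\item a multiple of $\|u\|_2^2$;
\item the bounded weight term $\int(\log\frac1{|x|})_-u^2$;
\item the long-range kernel term, whose kernel $|x-y|^{-N}\mathbf 1_{|x-y|\ge1}$ lies in $L^2(\Omega\times\Omega)$.
\end{itemize}
Each is continuous under strong $L^2$ convergence. Hence $\cE_{\ln,\mu}(u,u)\le\liminf_n \cE_{\ln,\mu}(u_n,u_n)=\lambda_1^{\ln,\mu}$, and $u=:\phi_1^{\ln,\mu}$ attains the infimum.

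\textbf{Step 4: Euler--Lagrange equation.} Fix $\varphi\in\bH_0^{\ln,\mu}(\Omega)$ and consider
\[
t\mapsto \frac{\cE_{\ln,\mu}(\phi_1+t\varphi,\phi_1+t\varphi)}{\|\phi_1+t\varphi\|_2^2},
\]
which is minimized at $t=0$. Differentiating at $t=0$, and using the symmetry and bilinearity of $\cE_{\ln,\mu}$, yields \eqref{eq:first-weak-equation}.

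\textbf{Step 5: positivity.} I first show $|u|\in\bH_0^{\ln,\mu}(\Omega)$ with $\cE_{\ln,\mu}(|u|,|u|)\le\cE_{\ln,\mu}(u,u)$.
\begin{itemize}
\item The local part satisfies $(|u(x)|-|u(y)|)^2\le(u(x)-u(y))^2$.
\item The potential and $L^2$ terms are unchanged.
\item The long-range term $-c_N\iint_{|x-y|\ge1}u(x)u(y)|x-y|^{-N}$ can only decrease when $u$ is replaced by $|u|$.
\end{itemize}
Therefore $|\phi_1|$ is also a minimizer, and we may assume $\phi_1\ge0$.

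For strict positivity, suppose $\phi_1$ vanishes on a set $Z\subset\Omega$ of positive measure. Test the equation with $\varphi=\mathbf 1_Z$, suitably approximated; more cleanly, evaluate the pointwise/weak form at points of $Z$. The nonlocal part then produces
\[
-c_N\int\phi_1(y)|x-y|^{-N}\,dy<0 \quad\text{on } Z,
\]
while the right-hand side vanishes there, which is a contradiction. An alternative route is to compare $\cE(\phi_1^+-\phi_1^-)$ with $\cE(\phi_1^++\phi_1^-)$, where the cross term $2c_N\iint\phi_1^+(x)\phi_1^-(y)|x-y|^{-N}>0$ forces a strict inequality. Either way, this step requires some care to justify rigorously.
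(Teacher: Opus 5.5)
\textbf{Verdict: there is a genuine gap in the strict positivity step.}

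Your Steps 1--4 and the reduction to $\phi_1^{\ln,\mu}\ge 0$ via $|u|$ follow the paper's proof: the two-sided bound \eqref{equ eng3}, a minimizing sequence with the compact embedding, lower semicontinuity, the Lagrange multiplier, and the $|u|$-trick. Your lower semicontinuity step, via continuity of $Q$ under strong $L^2$ convergence, is more careful than the paper's.

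The gap is the final claim $\phi_1^{\ln,\mu}>0$ a.e., which you leave unproved. None of your three routes closes it as stated:
\begin{itemize}
\item The sign identity you want, $\cE_{\ln,\mu}(\phi_1,\zeta)=-c_N\iint_{\R^N\times\R^N}\phi_1(x)\zeta(y)|x-y|^{-N}\,dx\,dy$, holds only for test functions with $\phi_1\zeta=0$ a.e. But $\mathbf 1_Z$ need not lie in $\bH_0^{\ln,\mu}(\Omega)$, since nothing prevents $Z$ from having empty interior. A generic approximation of $\mathbf 1_Z$ (mollification, say) charges $\{\phi_1>0\}$ and destroys the sign.
\item Evaluating the equation pointwise on $Z$ presupposes regularity of $\phi_1$. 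That regularity is not available for a weak solution in this zero-order energy space with a singular potential.
\item Comparing $\phi_1^+-\phi_1^-$ with $\phi_1^++\phi_1^-$ only rules out sign changes, which you already have from $|u|$. It says nothing about a nonnegative minimizer vanishing on a set of positive measure.
\end{itemize}
The paper closes this step by invoking a strong maximum principle for the logarithmic Laplacian plus a perturbation argument for the potential, so you need some substitute.

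\textbf{Repair: approximate $\mathbf 1_Z$ by functions adapted to $\phi_1$.} Your first idea works with this choice. Assume $\phi_1\ge 0$, $\phi_1\not\equiv0$, and set
\[
g_\delta=\frac{\delta}{\phi_1+\delta},\qquad \zeta_\delta=\eta\, g_\delta,
\]
with $\eta\in C_c^\infty(\Omega)$, $0\le\eta\le 1$, and $|Z\cap\{\eta>0\}|>0$.
\begin{itemize}
\item Admissibility: since $|g_\delta(x)-g_\delta(y)|\le\delta^{-1}|\phi_1(x)-\phi_1(y)|$, the function $\zeta_\delta$ has finite energy and compact support in $\Omega$, so it is an admissible test function.
\item Zero-order terms: since $0\le\phi_1\zeta_\delta\le\delta\eta$, the right-hand side $\lambda_1\int\phi_1\zeta_\delta$, the $\rho_N$-term and the potential term are all $O(\delta)$, because $\log|x|\in L^1(\Omega)$. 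This holds whatever the signs of $\mu$ and $\lambda_1$.
\item Local part: split
\[
(\phi_1(x)-\phi_1(y))(\zeta_\delta(x)-\zeta_\delta(y))=\eta(x)(\phi_1(x)-\phi_1(y))(g_\delta(x)-g_\delta(y))+g_\delta(y)(\phi_1(x)-\phi_1(y))(\eta(x)-\eta(y)).
\]
The first summand is $\le 0$ because $g_\delta$ is decreasing in $\phi_1$, so Fatou applies. The second converges by dominated convergence. Also $g_\delta\to\mathbf 1_{\{\phi_1=0\}}$ pointwise.
\end{itemize}
Collecting the limits of the local and long-range parts, with the Chen--Weth factor $\frac{c_N}{2}$ in front of the local term, gives
\[
0\le -c_N\iint_{\R^N\times\R^N}\frac{\phi_1(x)\,\eta(y)\mathbf 1_Z(y)}{|x-y|^N}\,dx\,dy<0 .
\]
This is a contradiction, so $|Z|=0$.
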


\begin{proof}
Note that for $u,v\in\bH_0^{\ln,\mu}(\Omega)$, we have
\begin{align}
  \cE_{\ln,\mu}(u,v)
  & =\langle u,v\rangle_{\ln,\mu}
    -c_N\int\!\!\!\int_{|x-y|>1}\frac{u(x)v(y)}{|x-y|^N}\,dxdy
    +2\mu \int_{\Omega} \Big(\log\frac1{|x|}\Big) u(x)v(x)\,dx\nonumber
    \\[2mm]&\qquad
    +(\rho_N-1-(d_{N,\Omega})_+)\int_{\Omega} u(x)v(x)dx,\label{equ eng1}
  \end{align}
then
  \begin{align}
  \cE_{\ln,\mu}(u,u)
  & =\|u\|_{\ln,\mu}^2
    -c_N\int\!\!\!\int_{|x-y|>1}\frac{u(x)u(y)}{|x-y|^N}\,dx\,dy\nonumber
    \\[2mm]&\quad +2\mu \int_{\Omega} \Big(\log\frac1{|x|}\Big) u(x)^2\,dx
    +(\rho_N-1- (d_{N,\Omega})_+)\int_{\Omega} u(x)^2dx \label{equ eng2}
  \end{align}
and then, one hand, we have that
\begin{align*}
  \cE_{\ln,\mu}(u,u)
  &    \leq \|u\|_{\ln,\mu}^2
    -c_N \int\!\!\!\int_{|x-y|>1}\frac{|u(x)u(y)|}{|x-y|^N}\,dx\,dy
    \\[2mm]&\quad +2|\mu| \int_{\Omega} \Big(\log\frac1{|x|}\Big)_- u(x)^2\,dx
    +\Big(\rho_N-1- (d_{N,\Omega})_+ \Big)\int_{\Omega} u(x)^2dx
    \\[2mm]&  \leq \|u\|_{\ln,\mu}^2
    +\Big( c_N |\Omega|^{\frac12} +2|\mu| \log R_0 + (\rho_N)_+ \Big)\int_{\Omega} u(x)^2dx,
\end{align*}
on the other hand, it shows that
\begin{align*}
  \cE_{\ln,\mu}(u,u)
  &  \geq \|u\|_{\ln,\mu}^2
    -c_N \int\!\!\!\int_{|x-y|>1}\frac{|u(x)u(y)|}{|x-y|^N}\,dx\,dy
    \\[2mm]&\quad -2|\mu| \int_{\Omega} \Big(\log\frac1{|x|}\Big)_- u(x)^2\,dx
    +(\rho_N-1)\int_{\Omega} u(x)^2dx
    \\ &   \geq \|u\|_{\ln,\mu}^2
    -\Big( c_N \sqrt{|\Omega|}  +2|\mu| \log R_0    + (d_{N,\Omega})_+ +|\rho_N|+1)\Big)\int_{\Omega} u(x)^2dx.
    \end{align*}
  By density and
continuity, every admissible $u\in\bH_0^{\ln,\mu}(\Omega)$ satisfies
  \begin{align}\label{equ eng3}
\|u\|_{\ln,\mu}^2- C_2\int_{\Omega} u(x)^2dx \leq  \cE_{\ln,\mu}(u,u) \leq \|u\|_{\ln,\mu}^2+ C_1\int_{\Omega} u(x)^2dx,
\end{align}
where  $C_1,C_2>0$ depend on $N, |\mu|,R_0$ and $|\Omega|$.
Hence $\lambda^{\ln,\mu}_{1}\in[-C_2,+\infty)$.

Let $(u_n)_n$ be a minimizing sequence with $\norm{u_n}_2=1$ of (\ref{eq:first-eigenvalue}).  Hence $(u_n)$ is bounded in $\bH_0^{\ln,\mu}(\Omega)$. After passing
to a subsequence, there is ${\phi}^{\ln,\mu}_{1}\in\bH_0^{\ln,\mu}(\Omega)$ such
that
\[
  u_n\rightharpoonup{\phi}^{\ln,\mu}_{1}
  \quad\text{in }\bH_0^{\ln,\mu}(\Omega),
  \qquad
  u_n\to{\phi}^{\ln,\mu}_{1}
  \quad\text{in }L^2(\Omega).
\]
The strong $L^2$ convergence gives
\[
  \norm{{\phi}^{\ln,\mu}_{1}}_2
  =\lim_{n\to\infty}\norm{u_n}_2=1.
\]
Combining with
\[
  \norm{{\phi}^{\ln,\mu}_{1}}_\mu^2
  \leq\liminf_{n\to\infty}\norm{u_n}_\mu^2,
\]
it yields that
\[
  \cE_{\ln,\mu}({\phi}^{\ln,\mu}_{1},{\phi}^{\ln,\mu}_{1})
  \leq\liminf_{n\to\infty}\cE_{\ln,\mu}(u_n,u_n)
  =\lambda^{\ln,\mu}_{1}.
\]
 hence
the minimum can be attained.

As ${\phi}^{\ln,\mu}_{1}$ is a minimizer of $\cE_{\ln,\mu}(u,u)$ under the constraint
$\norm{u}_2=1$, there exists $a\in\R$ such that
\[
\cE_{\ln,\mu}({\phi}^{\ln,\mu}_{1},\varphi)
=a\langle {\phi}^{\ln,\mu}_{1},\varphi\rangle_{2},
\qquad
\forall\, \varphi\in\bH_0^{\ln,\mu}(\Omega).
\]
Choosing $\varphi={\phi}^{\ln,\mu}_{1}$, we have that
\[
a=\cE_{\ln,\mu}({\phi}^{\ln,\mu}_{1},{\phi}^{\ln,\mu}_{1})
=\lambda^{\ln,\mu}_{1}.
\]

Now we are position to show  that ${\phi}^{\ln,\mu}_{1}\geq 0$ in $\Omega$.
Observe that
\[
  \mathcal E_{\ln,\mu}(|u|,|u|)
  \le
  \mathcal E_{\ln,\mu}(u,u)
  \qquad\text{for all }u\in\mathbb H_0^{\ln,\mu}(\Omega).
\]
Using \(\||\phi_1^{\ln,\mu}|\|_2=1\), we obtain that
\[
  \mathcal E_{\ln,\mu}(|\phi_1^{\ln,\mu}|,|\phi_1^{\ln,\mu}|)
  \le
  \mathcal E_{\ln,\mu}(\phi_1^{\ln,\mu},\phi_1^{\ln,\mu})
  =
  \lambda_1^{\ln,\mu}(\Omega).
\]
By the variational characterization of \(\lambda_1^{\ln,\mu}\), the
reverse inequality also holds, so
\[
  \mathcal E_{\ln,\mu}(|\phi_1^{\ln,\mu}|,|\phi_1^{\ln,\mu}|)
  =
  \lambda_1^{\ln,\mu}(\Omega).
\]
Hence \(|\phi_1^{\ln,\mu}|\) is also a first eigenfunction. Replacing
\(\phi_1^{\ln,\mu}\) by \(|\phi_1^{\ln,\mu}|\), we may assume
\(\phi_1^{\ln,\mu}\ge0\) a.e. in \(\Omega\).

\smallskip

Finally, we show  that ${\phi}^{\ln,\mu}_{1}> 0$ a.e. in $\Omega$.
We use the weak maximum principle for the logarithmic Laplacian with
a lower-order perturbation. Since \(\phi_1^{\ln,\mu}\ge0\) and
\(\phi_1^{\ln,\mu}\not\equiv0\), we have
\[
  \int_\Omega \phi_1^{\ln,\mu}\,dx>0.
\]
By contradiction,
suppose  that there exists a set
\(A\subset\Omega\) of positive measure such that
\[
  \phi_1^{\ln,\mu}=0
  \qquad\text{a.e. in }A.
\]
Let \(x_0\in\Omega\) be a Lebesgue point of \(\phi_1^{\ln,\mu}\)
with \(\phi_1^{\ln,\mu}(x_0)>0\). Such a point exists because
\(\phi_1^{\ln,\mu}\not\equiv0\).

The eigenfunction equation in weak form is
\[
  \mathcal E_{\ln,\mu}(\phi_1^{\ln,\mu},\varphi)
  =
  \lambda_1^{\ln,\mu}(\Omega)
  \int_\Omega \phi_1^{\ln,\mu}\varphi\,dx,
  \qquad
  \forall\varphi\in\mathbb H_0^{\ln,\mu}(\Omega).
\]
We rewrite the energy functional as
\[
  \mathcal E_{\ln,\mu}(u,\varphi)
  =
  \mathcal E_{\ln}(u,\varphi)
  +2\mu\int_\Omega \Big(\log\frac1{|x|}\Big)\,u\varphi\,dx.
\]
The logarithmic Laplacian satisfies the following strong maximum
principle: if \(u\in\mathbb H_0^{\ln}(\Omega)\), \(u\ge0\), \(u\not\equiv0\),
and
\[
  \mathcal E_{\ln}(u,\varphi)
  \ge 0,
  \qquad
  \forall\varphi\in\mathbb H_0^{\ln}(\Omega),\ \varphi\ge0,
\]
then \(u>0\) a.e. in \(\Omega\).

We apply this to \(\phi_1^{\ln,\mu}\). Since
\(\phi_1^{\ln,\mu}\ge0\) and
\[
  \mathcal E_{\ln,\mu}(\phi_1^{\ln,\mu},\varphi)
  =
  \lambda_1^{\ln,\mu}(\Omega)
  \int_\Omega \phi_1^{\ln,\mu}\varphi\,dx
  \ge 0,
  \qquad
  \forall\varphi\in\mathbb H_0^{\ln,\mu}(\Omega),\ \varphi\ge0,
\]
we obtain
\[
  \mathcal E_{\ln}(\phi_1^{\ln,\mu},\varphi)
  =
  \mathcal E_{\ln,\mu}(\phi_1^{\ln,\mu},\varphi)
  -2\mu\int_\Omega \Big(\log\frac1{|x|}\Big)\,\phi_1^{\ln,\mu}\varphi\,dx.
\]
The term
\[
  2\mu\int_\Omega \Big(\log\frac1{|x|}\Big)\,\phi_1^{\ln,\mu}\varphi\,dx
\]
is a lower-order perturbation. Since \(\mu>-1\), the potential
\(2\mu\log(1/|x|)\) is bounded below on \(\Omega\). By the standard
perturbation argument for the maximum principle, the sign of
\(\mathcal E_{\ln}(\phi_1^{\ln,\mu},\varphi)\) is preserved for
\(\varphi\ge0\). Hence \(\phi_1^{\ln,\mu}>0\) a.e. in \(\Omega\).
\end{proof}

 Next we consider the higher eigenvalues.

\begin{lemma}
\label{lm 3.2}
Let $\bH_{\mu,1}(\Omega)=\bH_0^{\ln,\mu}(\Omega)$. Once
${\phi}^{\ln,\mu}_{1},\ldots,{\phi}^{\ln,\mu}_{k-1}$ have been constructed, denote
\begin{equation}\label{eq:orthogonal-space}
 \bH_{\mu,k}(\Omega)
  :=\left\{u\in\bH_0^{\ln,\mu}(\Omega):
  \langle u,{\phi}^{\ln,\mu}_{j}\rangle_{2}=0,
  \ j=1,\ldots,k-1\right\}.
\end{equation}
Then, for every $k\geq2$, there is
${\phi}^{\ln,\mu}_{k}\in\bH_{\mu,k}(\Omega)$ such that
$\norm{{\phi}^{\ln,\mu}_{k}}_2=1$ and
\begin{equation}\label{eq:higher-minimum}
  \lambda^{\ln,\mu}_{k}
  :=\min_{\substack{u\in\bH_{\mu,k}(\Omega)\\
                    \norm{u}_2=1}}
  \cE_{\ln,\mu}(u,u)
  =\cE_{\ln,\mu}({\phi}^{\ln,\mu}_{k},{\phi}^{\ln,\mu}_{k}).
\end{equation}
Furthermore, the functions $\{{\phi}^{\ln,\mu}_{k}\}_{k\in\N}$ are $L^2$-orthonormal,
\begin{equation}\label{eq:higher-weak-equation}
  \cE_{\ln,\mu}({\phi}^{\ln,\mu}_{k},\varphi)
  =\lambda^{\ln,\mu}_{k}
  \langle {\phi}^{\ln,\mu}_{k},\varphi\rangle_{2},
  \qquad\varphi\in\bH_0^{\ln,\mu}(\Omega),
\end{equation}
and
\begin{equation}\label{eq:eigenvalue-monotonicity}
  \lambda^{\ln,\mu}_{k}\leq \lambda^{\ln,\mu}_{k+1}.
\end{equation}
\end{lemma}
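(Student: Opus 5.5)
The plan is to argue by induction on $k$, repeating the direct method of Lemma \ref{lm 3.1} on the closed subspace $\bH_{\mu,k}(\Omega)$.

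\textbf{Existence of a minimizer.} First, $\bH_{\mu,k}(\Omega)$ is a closed subspace of $\bH_0^{\ln,\mu}(\Omega)$: it is the intersection of kernels of the functionals $u\mapsto\langle u,\phi^{\ln,\mu}_j\rangle_2$, which are continuous because the embedding into $L^2(\Omega)$ is continuous by Theorem \ref{prop:energy-space}. The constraint set is nonempty since $\bH_0^{\ln,\mu}(\Omega)$ is infinite dimensional. By \eqref{equ eng3}, on $\{\|u\|_2=1\}$ we have $\cE_{\ln,\mu}(u,u)\ge \|u\|_{\ln,\mu}^2-C_2$. Hence the infimum $\lambda_k^{\ln,\mu}$ is finite, and any minimizing sequence $(u_n)$ is bounded in $\bH_0^{\ln,\mu}(\Omega)$.

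Passing to a subsequence, $u_n\rightharpoonup\phi$ weakly in $\bH_0^{\ln,\mu}(\Omega)$, and $u_n\to\phi$ strongly in $L^2$ by the compact embedding (this uses $\mu>-1$). The strong convergence gives $\|\phi\|_2=1$ and preserves the orthogonality relations, so $\phi$ is admissible; closedness also gives $\phi\in\bH_{\mu,k}(\Omega)$.

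For lower semicontinuity, use the decomposition \eqref{equ eng2}. The norm term is weakly lower semicontinuous. The remaining terms have to pass to the limit:
\begin{itemize}
\item The term $\int_\Omega u^2$ converges under strong $L^2$ convergence.
\item The far-field term $c_N\iint_{|x-y|>1}u(x)u(y)|x-y|^{-N}\,dx\,dy$ is a bounded bilinear form on $L^2$ (this is the estimate in the proof of Proposition \ref{lem:domain-hardy}), so it also converges.
\item The potential term is $2\mu\int(\log\frac1{|x|})u^2$. The part with $(\log\frac1{|x|})_-$ is bounded by $\log R_0$, so it converges. The part with $(\log\frac1{|x|})_+$ is already contained in $\|\cdot\|_{\ln,\mu}$; what is left over has coefficient $0$, since that term is included in the norm with the same coefficient $2\mu$.
\end{itemize}
Rereading \eqref{equ eng2} shows that the potential term there is in fact written with the full $\log$. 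I would first check the exact bookkeeping and, if needed, regroup so that the singular positive part appears only inside $\|\cdot\|_{\ln,\mu}^2$, possibly with a negative coefficient when $\mu<0$. If the coefficient is negative, I would instead write $\cE_{\ln,\mu}(u,u)=\cE_{\ln,0}(u,u)+2\mu\int\log\frac1{|x|}u^2$ and use Proposition \ref{lem:domain-hardy}. This bounds it below by $(1+\mu)$ times a lower semicontinuous quadratic form, plus $L^2$-continuous terms; I expect this to be the technical heart of the argument. Then $\cE_{\ln,\mu}(\phi,\phi)\le\liminf\cE_{\ln,\mu}(u_n,u_n)=\lambda_k^{\ln,\mu}$, so $\phi_k^{\ln,\mu}:=\phi$ attains \eqref{eq:higher-minimum}.

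\textbf{Euler--Lagrange equation.} For $\varphi\in\bH_{\mu,k}(\Omega)$, perturb $t\mapsto(\phi_k+t\varphi)/\|\phi_k+t\varphi\|_2$ and differentiate at $t=0$. This gives $\cE_{\ln,\mu}(\phi_k,\varphi)=\lambda_k\langle\phi_k,\varphi\rangle_2$, using that $\cE_{\ln,\mu}$ is symmetric and continuous on $\bH_0^{\ln,\mu}$ by \eqref{equ eng1}. For a general $\varphi\in\bH_0^{\ln,\mu}(\Omega)$, decompose $\varphi=\varphi'+\sum_{j<k}\langle\varphi,\phi_j\rangle_2\phi_j$ with $\varphi'\in\bH_{\mu,k}(\Omega)$. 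For $j<k$, symmetry and the equation for $\phi_j$ give
\[
\cE_{\ln,\mu}(\phi_k,\phi_j)=\cE_{\ln,\mu}(\phi_j,\phi_k)=\lambda_j\langle\phi_j,\phi_k\rangle_2=0,
\]
and also $\langle\phi_k,\phi_j\rangle_2=0$. Hence both sides of the identity vanish on the $\phi_j$ components, and \eqref{eq:higher-weak-equation} holds for every $\varphi$.

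\textbf{Orthonormality and monotonicity.} $L^2$-orthonormality holds by construction. Monotonicity \eqref{eq:eigenvalue-monotonicity} follows from $\bH_{\mu,k+1}(\Omega)\subset\bH_{\mu,k}(\Omega)$: the minimum in the definition of $\lambda_{k+1}^{\ln,\mu}$ is taken over a smaller set, so it cannot be smaller than $\lambda_k^{\ln,\mu}$.
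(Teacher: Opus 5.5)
Your proposal is correct and follows the paper's proof essentially step for step: closedness of $\bH_{\mu,k}(\Omega)$, the direct method with the compact embedding, the Lagrange multiplier identity on $\bH_{\mu,k}(\Omega)$, its extension to all $\varphi\in\bH_0^{\ln,\mu}(\Omega)$ via the decomposition and $\cE_{\ln,\mu}(\phi^{\ln,\mu}_k,\phi^{\ln,\mu}_j)=0$ for $j<k$, and monotonicity from $\bH_{\mu,k+1}(\Omega)\subset\bH_{\mu,k}(\Omega)$. Your suspicion about \eqref{equ eng2} is justified: the leftover potential term should be $-2\mu\int_\Omega(\log\frac1{|x|})_-u^2\,dx$, which is $L^2$-continuous. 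Your first regrouping therefore already gives lower semicontinuity for every $\mu>-1$, because $\|\cdot\|_{\ln,\mu}$ is the Hilbert norm of $\bH_0^{\ln,\mu}(\Omega)$ by Theorem~\ref{prop:energy-space} and is weakly lower semicontinuous regardless of the sign of $\mu$. The fallback is unnecessary, and as phrased (a lower bound only) it would not by itself give lower semicontinuity.
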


\begin{proof}
	Lemma \ref{lm 3.1} gives the assertion for $k=1$.
	Suppose that ${\phi}^{\ln,\mu}_{1},\ldots,{\phi}^{\ln,\mu}_{k}$ have been constructed.
	By \eqref{eq:orthogonal-space},
	$\bH_{\mu,k+1}(\Omega)$ is a closed subspace of
	$\bH_0^{\ln,\mu}(\Omega)$. Since only finitely many orthogonality
	conditions are imposed, its $L^2$-unit sphere is nonempty.
	
	The same minimization argument as in
	Lemma \ref{lm 3.1}, together with the compact
	embedding \eqref{eq:perturbed-compact}, gives
	$\phi^{\ln,\mu}_{k+1}\in\bH_{\mu,k+1}(\Omega)$ such that
	$\norm{\phi^{\ln,\mu}_{k+1}}_2=1$ and
	
	$$
	\lambda^{\ln,\mu}_{k+1}
	=\cE_{\ln,\mu}(\phi^{\ln,\mu}_{k+1},\phi^{\ln,\mu}_{k+1}).
	$$
	
	Applying the Lagrange multiplier theorem to the minimization problem on
	$\bH_{\mu,k+1}(\Omega)$ gives
	\begin{equation}\label{eq:subspace-weak-equation}
		\cE_{\ln,\mu}({\phi}^{\ln,\mu}_{k+1 },\varphi)
		=\lambda^{\ln,\mu}_{k+1}
		\langle{\phi}^{\ln,\mu}_{k+1 },\varphi\rangle_{2},
		\qquad\forall
		\varphi\in\bH_{\mu,k+1}(\Omega).
	\end{equation}
	
	For $\varphi\in\bH_0^{\ln,\mu}(\Omega)$, set
	\begin{equation}\label{eq:orthogonal-decomposition}
		\varphi_0
		:=\varphi-
		\sum_{j=1}^{k}
		\langle\varphi,{\phi}^{\ln,\mu}_{j}\rangle_{2}{\phi}^{\ln,\mu}_{j}.
	\end{equation}
	By construction,
	$\varphi_0\in\bH_{\mu,k+1}(\Omega)$. Moreover, from the orthonormal property of
	${\phi}^{\ln,\mu}_{1},\ldots,{\phi}^{\ln,\mu}_{k}$, it shows that
	$$
	\cE_{\ln,\mu}(\phi^{\ln,\mu}_{k+1},{\phi}^{\ln,\mu}_{j})
	=\lambda_{j,\mu}
	\langle {\phi}^{\ln,\mu}_{j},\phi^{\ln,\mu}_{k+1}\rangle_{2}
	=0,
	\qquad j=1,\ldots,k.
	$$
	Using \eqref{eq:orthogonal-decomposition} and
	\eqref{eq:subspace-weak-equation}, we obtain
	$$
	\cE_{\ln,\mu}(\phi^{\ln,\mu}_{k+1},\varphi)
	=\lambda^{\ln,\mu}_{k+1}
	\langle \phi^{\ln,\mu}_{k+1},\varphi\rangle_{2},
	\quad
	\forall \varphi\in\bH_0^{\ln,\mu}(\Omega),
	$$
which implies \eqref{eq:higher-weak-equation}.
	
	Finally,
	$\bH_{\mu,k+1}(\Omega)\subset\bH_{\mu,k}(\Omega)$, and
	\eqref{eq:higher-minimum} gives
	
	$$
	\lambda^{\ln,\mu}_{k}\leq\lambda^{\ln,\mu}_{k+1}.
	$$
	
	The construction proceeds inductively for every $k\in\mathbb N$.
	This completes the proof.
\end{proof}

\begin{lemma}
\label{lm 3.3}
The variational eigenvalues satisfy
\begin{equation}\label{eq:eigenvalues-diverge}
  \lambda^{\ln,\mu}_{k}\to +\infty
  \quad\text{as }k\to\infty.
\end{equation}
Moreover, the eigenfunctions are complete in $L^2(\Omega)$, i.e.
\begin{equation}\label{eq:eigenfunctions-complete}
  \overline{\operatorname{span}\{{\phi}^{\ln,\mu}_{k}:k\in\mathbb N\}}
  ^{\|\cdot\|_{2}}=L^2(\Omega).
\end{equation}
\end{lemma}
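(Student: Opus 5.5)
To prove that $\lambda^{\ln,\mu}_{k}\to+\infty$, I would argue by contradiction, combining the compact embedding \eqref{eq:perturbed-compact} with the two-sided energy bound \eqref{equ eng3}. By Lemma \ref{lm 3.2} the sequence $(\lambda^{\ln,\mu}_{k})_k$ is nondecreasing, so if it did not diverge it would be bounded above by some $\Lambda<\infty$. Then \eqref{equ eng3} gives, for every $k$,
\[
\|\phi^{\ln,\mu}_{k}\|_{\ln,\mu}^2\le \cE_{\ln,\mu}(\phi^{\ln,\mu}_{k},\phi^{\ln,\mu}_{k})+C_2\|\phi^{\ln,\mu}_{k}\|_2^2\le \Lambda+C_2 .
\]
Hence $(\phi^{\ln,\mu}_{k})$ is bounded in $\bH_0^{\ln,\mu}(\Omega)$. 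By the compactness part of Theorem \ref{prop:energy-space} (here $\mu>-1$), a subsequence converges strongly in $L^2(\Omega)$. This is impossible, because the $\phi^{\ln,\mu}_{k}$ are $L^2$-orthonormal, so $\|\phi^{\ln,\mu}_{k}-\phi^{\ln,\mu}_{j}\|_2^2=2$ for all $k\ne j$. Therefore $\lambda^{\ln,\mu}_{k}\to+\infty$.

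For completeness \eqref{eq:eigenfunctions-complete}, I would first prove density in the energy space and then pass to $L^2$. Let $V$ be the $L^2$-closure of the span of the eigenfunctions. Since $C_c^\infty(\Omega)\subset\bH_0^{\ln,\mu}(\Omega)$ is dense in $L^2(\Omega)$, it suffices to show that every $u\in\bH_0^{\ln,\mu}(\Omega)$ that is $L^2$-orthogonal to all $\phi^{\ln,\mu}_{j}$ vanishes. Indeed, for general $u\in\bH_0^{\ln,\mu}(\Omega)$ the function $u-\sum_{j\le k}\langle u,\phi^{\ln,\mu}_j\rangle_2\phi^{\ln,\mu}_j$ lies in $\bH_{\mu,k+1}(\Omega)$, and the same reasoning below applies to it.

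Take such a $u$ with $u\neq0$. Then $u\in\bH_{\mu,k}(\Omega)$ for every $k$. By the minimality \eqref{eq:higher-minimum}, $\cE_{\ln,\mu}(u,u)\ge \lambda^{\ln,\mu}_{k}\|u\|_2^2$ for all $k$. The left side is finite by the upper bound in \eqref{equ eng3}, while the right side tends to $+\infty$ by the first part. This contradiction forces $u=0$.

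To get the quantitative statement for general $u$, I would expand in the eigenfunctions. Write $u_k:=u-\sum_{j<k}\langle u,\phi^{\ln,\mu}_j\rangle_2\phi^{\ln,\mu}_j\in\bH_{\mu,k}(\Omega)$. Using \eqref{eq:higher-weak-equation}, $\cE_{\ln,\mu}$-orthogonality gives
\[
\cE_{\ln,\mu}(u_k,u_k)=\cE_{\ln,\mu}(u,u)-\sum_{j<k}\lambda^{\ln,\mu}_j\langle u,\phi^{\ln,\mu}_j\rangle_2^2 .
\]
The sum over the finitely many negative $\lambda^{\ln,\mu}_j$ is bounded by $C\|u\|_2^2$, and the remaining terms are nonnegative. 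So $\cE_{\ln,\mu}(u_k,u_k)\le \cE_{\ln,\mu}(u,u)+C\|u\|_2^2$ uniformly in $k$. Hence $\|u_k\|_2^2\le (\cE_{\ln,\mu}(u,u)+C\|u\|_2^2)/\lambda^{\ln,\mu}_k\to0$ once $\lambda^{\ln,\mu}_k>0$, so $u\in V$. By density, $V=L^2(\Omega)$.

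The main technical point is this last step. It needs the expansion identity, which uses symmetry and bilinearity of $\cE_{\ln,\mu}$ on $\bH_0^{\ln,\mu}(\Omega)$ (clear from \eqref{equ eng1}), and uniform control of the possibly negative first few eigenvalues. Both are handled by \eqref{equ eng3} and the fact that only finitely many $\lambda^{\ln,\mu}_j$ are negative.
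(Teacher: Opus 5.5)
Your proof is correct and follows the paper's route: divergence comes from the lower bound in \eqref{equ eng3}, the compact embedding and $L^2$-orthonormality, and completeness comes from the identity $\cE_{\ln,\mu}(u_k,u_k)=\cE_{\ln,\mu}(u,u)-\sum_{j<k}\lambda^{\ln,\mu}_j\langle u,\phi^{\ln,\mu}_j\rangle_2^2$ together with the Rayleigh-quotient bound on $\bH_{\mu,k}(\Omega)$, which you use directly to get $\|u_k\|_2^2\le C(u)/\lambda^{\ln,\mu}_k\to0$ for every $u\in\bH_0^{\ln,\mu}(\Omega)$, whereas the paper argues by contradiction with an $f\in L^2(\Omega)$ orthogonal to all eigenfunctions and approximated by some $w\in\bH_0^{\ln,\mu}(\Omega)$. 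The only flaw is your intermediate reduction (that it suffices to show an $L^2$-orthogonal $u\in\bH_0^{\ln,\mu}(\Omega)$ vanishes), which is not valid by itself because a nonzero closed subspace of $L^2(\Omega)$ can meet a dense subspace only in $\{0\}$; however, your final quantitative paragraph proves $\bH_0^{\ln,\mu}(\Omega)\subset V$ outright, so the argument is complete without that reduction.
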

\begin{proof}
We first prove \eqref{eq:eigenvalues-diverge}. Suppose, for contradiction,
that the nondecreasing sequence $(\lambda^{\ln,\mu}_{k})_{k\in\mathbb N}$
is bounded above by some constant $\Lambda>0$. Let
$(\phi^{\ln,\mu}_{k})_{k\in\mathbb N}$ be the corresponding
$L^2$-orthonormal eigenfunctions constructed in Lemma~\ref{lm 3.2}.
Then, for every $k\in\mathbb N$,
\[
  \cE_{\ln,\mu}(\phi^{\ln,\mu}_{k},\phi^{\ln,\mu}_{k})
  =\lambda^{\ln,\mu}_{k}\le \Lambda.
\]
By \eqref{equ eng3}, there exist constants $C_1,C_2>0$, depending only
on $N,\Omega,\mu$, such that
\[
  \|\phi^{\ln,\mu}_{k}\|_{\ln,\mu}^2
  \le   \cE_{\ln,\mu}(\phi^{\ln,\mu}_{k},\phi^{\ln,\mu}_{k})
  +C_2\,\|\phi^{\ln,\mu}_{k}\|_2^2
  \le  \Lambda+C_2.
\]
Thus $(\phi^{\ln,\mu}_{k})_{k\in\mathbb N}$ is bounded in
$\bH_0^{\ln,\mu}(\Omega)$. By the compact embedding
\eqref{eq:perturbed-compact}, after passing to a subsequence, we may
assume that
\[
  \phi^{\ln,\mu}_{k_j}\to \phi
  \quad\text{in }L^2(\Omega)
\]
for some $\phi\in L^2(\Omega)$.

On the other hand, since the eigenfunctions are $L^2$-orthonormal,
\[
  \|\phi^{\ln,\mu}_{k_i}-\phi^{\ln,\mu}_{k_j}\|_2^2=2
  \qquad\text{for }i\neq j.
\]
Hence the subsequence $(\phi^{\ln,\mu}_{k_j})$ cannot be Cauchy in
$L^2(\Omega)$, a contradiction. Therefore
\[
  \lambda^{\ln,\mu}_{k}\to +\infty
  \qquad\text{as }k\to\infty.
\]

{\it We next prove completeness. } Suppose that
\[
  \overline{\operatorname{span}\{\phi^{\ln,\mu}_{k}:k\in\mathbb N\}}
  ^{\|\cdot\|_{2}}\neq L^2(\Omega).
\]
Then there exists $  f\in L^2(\Omega)\setminus\{0\}$ such that
\begin{equation}\label{eq:orthogonal-complement}
  \langle f,\phi^{\ln,\mu}_{j}\rangle_{2}=0
  \qquad\text{for all }j\in\mathbb N.
\end{equation}
Since $\bH_0^{\ln,\mu}(\Omega)$ is dense in $L^2(\Omega)$, we may choose
$w\in\bH_0^{\ln,\mu}(\Omega)$ such that
\[
  \|w-f\|_2<\varepsilon,
  \qquad
  0<\varepsilon<\frac14\|f\|_2.
\]
For $k\ge2$, define
\[
  b_j:=\langle w,\phi^{\ln,\mu}_{j}\rangle_{2},
  \qquad
  w_k:=w-\sum_{j=1}^{k-1}b_j\phi^{\ln,\mu}_{j}.
\]
Then $w_k\in\bH_{\mu,k}(\Omega)$. By \eqref{eq:orthogonal-complement}, it holds that
\[
  b_j=\langle w-f,\phi^{\ln,\mu}_{j}\rangle_{2}.
\]
Then using Bessel's inequality, we have that
\[
  \sum_{j=1}^{k-1}b_j^2
  \le \|w-f\|_2^2
  <\varepsilon^2.
\]
Consequently,
\begin{align*}
  \|w_k\|_2^2
  &=\|w\|_2^2-\sum_{j=1}^{k-1}b_j^2
  \ge (\|f\|_2-\varepsilon)^2-\varepsilon^2
  =\|f\|_2^2-2\varepsilon\|f\|_2>0.
\end{align*}
Hence $w_k\neq0$.

By the variational characterization \eqref{eq:higher-minimum},
\[
  \lambda^{\ln,\mu}_{k}
  \le
  \frac{\cE_{\ln,\mu}(w_k,w_k)}{\|w_k\|_2^2}.
\]
We now estimate the numerator. Since
$w_k=w-\sum_{j=1}^{k-1}b_j\phi^{\ln,\mu}_{j}$ and the eigenfunctions satisfy
\[
  \cE_{\ln,\mu}(\phi^{\ln,\mu}_{j},\varphi)
  =\lambda^{\ln,\mu}_{j}
  \langle \phi^{\ln,\mu}_{j},\varphi\rangle_{2}
  \qquad\text{for all }\varphi\in\bH_0^{\ln,\mu}(\Omega),
\]
then we obtain
\begin{align*}
  \cE_{\ln,\mu}(w_k,w_k)
  &=\cE_{\ln,\mu}(w,w)
    -2\sum_{j=1}^{k-1}b_j\cE_{\ln,\mu}(w,\phi^{\ln,\mu}_{j})
    +\sum_{i,j=1}^{k-1}b_ib_j
      \cE_{\ln,\mu}(\phi^{\ln,\mu}_{i},\phi^{\ln,\mu}_{j})\\
  &=\cE_{\ln,\mu}(w,w)
    -2\sum_{j=1}^{k-1}b_j\lambda^{\ln,\mu}_{j}
      \langle w,\phi^{\ln,\mu}_{j}\rangle_{2}
    +\sum_{j=1}^{k-1}b_j^2\lambda^{\ln,\mu}_{j}\\
  &=\cE_{\ln,\mu}(w,w)
    -\sum_{j=1}^{k-1}\lambda^{\ln,\mu}_{j}b_j^2.
\end{align*}
Therefore,
\[
  \cE_{\ln,\mu}(w_k,w_k)
  \le \cE_{\ln,\mu}(w,w),
\]
since $\lambda^{\ln,\mu}_{j}\ge \lambda^{\ln,\mu}_{1}$ and
$\lambda^{\ln,\mu}_{1}$ may be negative, but the sum
$\sum_{j=1}^{k-1}\lambda^{\ln,\mu}_{j}b_j^2$ is bounded below by
$\lambda^{\ln,\mu}_{1}\sum_{j=1}^{k-1}b_j^2$, which is bounded
independently of $k$.

Since $\cE_{\ln,\mu}$ is coercive on $\bH_0^{\ln,\mu}(\Omega)$ modulo
$L^2$-norm, there exist constants $C_3,C_4>0$ such that
\[
  \cE_{\ln,\mu}(v,v)
  \ge C_3\|v\|_{\ln,\mu}^2-C_4\|v\|_2^2
  \qquad\text{for all }v\in\bH_0^{\ln,\mu}(\Omega).
\]
Applying this to $w_k$ and using the orthogonality, we get
\[
  \cE_{\ln,\mu}(w_k,w_k)
  \le \cE_{\ln,\mu}(w,w)+C_4\|w_k\|_2^2.
\]
Hence
\[
  \lambda^{\ln,\mu}_{k}
  \le
  \frac{\cE_{\ln,\mu}(w,w)}{\|w_k\|_2^2}+C_4
  \le
  \frac{\cE_{\ln,\mu}(w,w)}{\|f\|_2^2-2\varepsilon\|f\|_2}+C_4.
\]
The right-hand side is independent of $k$, so the eigenvalues
$(\lambda^{\ln,\mu}_{k})$ are bounded above, contradicting
\eqref{eq:eigenvalues-diverge}. Therefore
\eqref{eq:eigenfunctions-complete} holds.
\end{proof}

\begin{proof}[{\bf Proof of Theorem~\ref{teo 1-m}}]
Theorem~\ref{prop:energy-space} establishes the identification of the energy spaces, norm equivalence, and compactness.
Lemma \ref{lm 3.1}  and Lemma  \ref{lm 3.2} give the variational eigenvalues and weak
eigenfunctions. Their divergence and completeness follow from
  Lemma  \ref{lm 3.3}.\smallskip

{\it Now we show that the first eigenvalue
\(\lambda_1^{\ln,\mu}(\Omega)\) is simple.}   Suppose, for contradiction, that there exists another eigenfunction
\(u\in\mathbb H_0^{\ln,\mu}(\Omega)\) corresponding to
\(\lambda_1^{\ln,\mu}(\Omega)\) which is linearly independent of
\(\phi_1^{\ln,\mu}\). By replacing \(u\) with
\[
  u-\Bigl(\int_\Omega u\,\phi_1^{\ln,\mu}\,dx\Bigr)\phi_1^{\ln,\mu},
\]
we may assume that
\[
  \int_\Omega u\,\phi_1^{\ln,\mu}\,dx=0.
\]
Since \(u\) is an eigenfunction with eigenvalue
\(\lambda_1^{\ln,\mu}(\Omega)\), the same argument as in
Lemma~\ref{lm 3.1} shows that \(|u|\) is also a first
eigenfunction. By Lemma~\ref{lm 3.1}, either
\(|u|>0\) a.e. in \(\Omega\), or \(u\equiv0\). Since \(u\not\equiv0\),
we have \(|u|>0\) a.e. in \(\Omega\).

In particular, \(u\) does not change sign in \(\Omega\). Hence either
\(u>0\)  or \(u<0\) a.e. in \(\Omega\). In both cases, we have that
\[
  \int_\Omega u\,\phi_1^{\ln,\mu}\,dx\neq0,
\]
because \(\phi_1^{\ln,\mu}>0\) a.e. in \(\Omega\). This contradicts
the orthogonality condition
\[
  \int_\Omega u\,\phi_1^{\ln,\mu}\,dx=0.
\]
Therefore no such \(u\) exists, and the eigenspace corresponding to
\(\lambda_1^{\ln,\mu}(\Omega)\) is one-dimensional.

{\it Next we show that $\lambda^{\ln,\mu}_1<\lambda^{\ln,\mu}_2$.}
  Let
\(\phi_1^{\ln,\mu}\) be a corresponding eigenfunction with
\(\|\phi_1^{\ln,\mu}\|_2=1\). By Lemma \ref{lm 3.2}, the second
eigenvalue \(\lambda_2^{\ln,\mu}(\Omega)\) is characterized by
\[
  \lambda_2^{\ln,\mu}(\Omega)
  =
  \min_{\substack{u\in\mathbb H_{\mu,2}(\Omega)\\ \|u\|_2=1}}
  \mathcal E_{\ln,\mu}(u,u),
\]
where
\[
  \mathbb H_{\mu,2}(\Omega)
  =
  \Bigl\{
    u\in\mathbb H_0^{\ln,\mu}(\Omega):
    \int_\Omega u\,\phi_1^{\ln,\mu}\,dx=0
  \Bigr\}.
\]
Let \(\phi_2^{\ln,\mu}\in\mathbb H_{\mu,2}(\Omega)\) be a minimizer
with \(\|\phi_2^{\ln,\mu}\|_2=1\). Then
\[
  \int_\Omega \phi_2^{\ln,\mu}\,\phi_1^{\ln,\mu}\,dx=0.
\]
Since \(\phi_1^{\ln,\mu}\) spans the entire eigenspace corresponding
to \(\lambda_1^{\ln,\mu}(\Omega)\), the function
\(\phi_2^{\ln,\mu}\) cannot be an eigenfunction with eigenvalue
\(\lambda_1^{\ln,\mu}(\Omega)\). Therefore
\[
  \mathcal E_{\ln,\mu}(\phi_2^{\ln,\mu},\phi_2^{\ln,\mu})
  \neq
  \lambda_1^{\ln,\mu}(\Omega).
\]
By Lemma~\ref{lm 3.2}, we have
\[
  \lambda_1^{\ln,\mu}(\Omega)
  \le
  \lambda_2^{\ln,\mu}(\Omega).
\]
Combining this with the previous inequality, we obtain
\[
  \lambda_1^{\ln,\mu}(\Omega)
  <
  \lambda_2^{\ln,\mu}(\Omega).
\]

{\it Finally,  we prove that  $\lambda^{\ln,\mu}_{1}(\Omega)\geq0$ if $\Omega\subset B_r(0)$ with $r>0$
small enough.}
By the logarithmic Hardy inequality \eqref{eq:smooth-hardy-control} and for $|x-y|\geq1$, $u(x)u(y)=0$,
we have that
\begin{align}
  \cE_{\ln,\mu}(u,u)
    & = c_N\int\!\!\!\int_{|x-y|<1}\frac{(u(x)-u(y))^2}{|x-y|^N}\,dx dy\nonumber
    -c_N\int\!\!\!\int_{|x-y|\geq1}\frac{u(x)u(y)}{|x-y|^N}\,dx dy
      \nonumber
      \\[2mm]&\quad   +\rho_N\int_{\Omega} u(x)^2dx+2\mu \int_{\Omega} \Big(\log\frac1{|x|}\Big) u(x)^2\,dx,\nonumber
        \\[2mm]& =\|u\|_{\ln}^2 +(\rho_N-1)\int_{\Omega} u(x)^2dx+2\mu \int_{\Omega} \Big(\log\frac1{|x|}\Big) u(x)^2\,dx\nonumber
        \\[2mm]& \geq 2(1+\mu)\int_\Omega\big(\log\frac1{|x|}\big)\,u(x)^2\,dx+ (\rho_N-1+d_{N,\Omega}) \norm{u}_{2}^2\nonumber
          \\[2mm]& \geq \Big(2(1+\mu)\log\frac1r+\rho_N-1+d_{N,\Omega}\Big) \norm{u}_{2}^2\nonumber
          \\[2mm]& \geq 0, \nonumber
\end{align}
 if $2(1+\mu)\log\frac1r+\rho_N-1+d_{N,\Omega}\geq0$.
This meas, when $\Omega\subset B_r(0)$ with $r\in(0,\frac12)$
small enough, we have
\[
  \mathcal E_{\ln,\mu}(u,u)\ge0
  \qquad
  \forall u\in\mathbb H_0^{\ln,\mu}(\Omega),\ \|u\|_2=1.
\]
 Taking the infimum over such
\(u\), we obtain
\[
  \lambda_1^{\ln,\mu}(\Omega)\ge0.
\]
The proof ends.

\end{proof}

\section{Properties of Eigenvalues}
In this section, we analyze further properties of the
eigenvalues, including a uniform lower bound for
$\lambda_1^{\ln,\mu}(\Omega)$ and the scaling property
  for dilated domains.



\begin{lemma}\label{lem:uniform-lower-bound}
Let $\mu>-1$ and $\Omega\subset\R^N$ be a bounded Lipschitz domain with $N\geq 1$
containing the origin, deonte
\[
  R_0=\sup_{x\in\Omega}|x|.
\]

Then the first Dirichlet eigenvalue $\lambda_1^{\ln,\mu}(\Omega)$ of $\cL_\mu$ satisfies
\begin{equation}\label{eq:uniform-lower-bound}
  \lambda_1^{\ln,\mu}(\Omega)
  \;\geq\;
  2\Big(\psi\!\Big(\frac N4\Big)+\log 2\Big)
  -2(1+\mu)\log R_0.
\end{equation}
In particular, if $\Omega\subset B_1(0)$, then
\begin{equation}\label{eq:uniform-lower-bound-ball}
  \lambda_1^{\ln,\mu}(\Omega)
  \;\geq\;
  2\Big(\psi\!\Big(\frac N4\Big)+\log 2\Big).
\end{equation}
\end{lemma}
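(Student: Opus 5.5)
The plan is to work directly with Beckner's inequality in the whole-space form \eqref{eq:whole-space-hardy}, not with the domain-dependent inequality \eqref{eq:smooth-hardy-control}. The reason is that \eqref{eq:uniform-lower-bound} contains no $d_{N,\Omega}$ or $C_N|\Omega|^{1/2}$ terms. For $u\in C_c^\infty(\Omega)$, extended by zero, the energy is
\[
\cE_{\ln,\mu}(u,u)=2\int_{\R^N}(\log|\xi|)|\widehat u(\xi)|^2\,d\xi+2\mu\int_\Omega\Big(\log\frac1{|x|}\Big)u^2\,dx ,
\]
as used in the proof of Proposition \ref{lem:domain-hardy}. Multiplying \eqref{eq:whole-space-hardy} by $2$ gives
\[
2\int_{\R^N}(\log|\xi|)|\widehat u|^2\,d\xi\ \ge\ 2\int_\Omega\Big(\log\frac1{|x|}\Big)u^2\,dx+2\Big(\psi\big(\tfrac N4\big)+\log 2\Big)\|u\|_2^2 .
\]
Substituting this into the expression for the energy yields
\[
\cE_{\ln,\mu}(u,u)\ \ge\ 2(1+\mu)\int_\Omega\Big(\log\frac1{|x|}\Big)u^2\,dx+2\Big(\psi\big(\tfrac N4\big)+\log2\Big)\|u\|_2^2 .
\]

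Next, since $\mu>-1$ the coefficient $1+\mu$ is positive. On $\Omega$ we have $|x|\le R_0$, so $\log\frac1{|x|}\ge-\log R_0$ pointwise. This holds whatever the sign of $\log R_0$, and the logarithm is integrable against $u^2$ because $u$ is bounded. Hence
\[
\cE_{\ln,\mu}(u,u)\ge\Big[2\Big(\psi\big(\tfrac N4\big)+\log2\Big)-2(1+\mu)\log R_0\Big]\|u\|_2^2\qquad\text{for all } u\in C_c^\infty(\Omega).
\]

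The remaining step is to pass from $C_c^\infty(\Omega)$ to all of $\bH_0^{\ln,\mu}(\Omega)$ and then take the infimum in \eqref{eq:first-eigenvalue}. I expect this density step to be the main technical point. By Theorem \ref{prop:energy-space} and the identity \eqref{equ eng2}, $u\mapsto\cE_{\ln,\mu}(u,u)$ is continuous in the $\|\cdot\|_{\ln,\mu}$ norm. Each term of \eqref{equ eng2} is controlled by $\|u\|_{\ln,\mu}$: the long-range term is bounded by $C_N|\Omega|^{1/2}\|u\|_2^2$, and the term $(\log\frac1{|x|})_-$ is bounded on bounded $\Omega$. The positive part of the logarithm is handled through the norm itself when $\mu\ge0$, and through Proposition \ref{lem:domain-hardy} when $\mu<0$. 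The embedding into $L^2$ is continuous, so $\|u\|_2^2$ is continuous as well. Approximating $u\in\bH_0^{\ln,\mu}(\Omega)$ by $u_n\in C_c^\infty(\Omega)$ therefore preserves the inequality. Taking the infimum over $\|u\|_2=1$ gives \eqref{eq:uniform-lower-bound}.

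For \eqref{eq:uniform-lower-bound-ball}: if $\Omega\subset B_1(0)$ then $R_0\le1$, so $-2(1+\mu)\log R_0\ge0$, and the second bound follows from the first.
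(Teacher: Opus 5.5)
Your proposal is correct and follows essentially the same route as the paper. Both arguments apply Beckner's logarithmic uncertainty inequality, in the unitary-Fourier form, to $u\in C_c^\infty(\Omega)$, then use $\log\frac1{|x|}\ge-\log R_0$ together with $1+\mu>0$, and finally pass to $\bH_0^{\ln,\mu}(\Omega)$ by density and continuity before taking the infimum; the only difference is that you spell out the continuity of $\cE_{\ln,\mu}$ in the $\|\cdot\|_{\ln,\mu}$ norm, which the paper states in one line.
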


\begin{proof}
Let $u\in C_c^\infty(\Omega)$ with $\|u\|_{2}=1$.
Recall Beckner's logarithmic uncertainty inequality 
\begin{equation}\label{eq:beckner-again}
  \int_{\R^N}\Big(\log\frac1{|x|}\Big)\,|f(x)|^2\,dx
  \leq
  \int_{\R^N}(\log|\xi|)\,|\widehat f(\xi)|^2\,d\xi
  -\Big(\psi\!\Big(\frac N4\Big)+\log 2\Big)
  \norm{f}_{L^2}^2,
\end{equation}
for every $f\in C_c^\infty(\R^N)$.

Since the Fourier symbol of $(-\Delta)^{\ln}$ is $2\log|\xi|$, we have
\[
  \cE_{\ln,0}(u,u)
  =\int_{\R^N}u(x)(-\Delta)^{\ln}u(x)\,dx
  =2\int_{\R^N}(\log|\xi|)\,|\widehat u(\xi)|^2\,d\xi .
\]
Applying \eqref{eq:beckner-again} to $u$ and using
$\|u\|_{2}=1$, we obtain
\begin{equation}\label{eq:energy-ln-lower}
  \cE_{\ln,0}(u,u)
  \;\geq\;
  2\int_{\Omega}\Big(\log\frac1{|x|}\Big)\,u(x)^2\,dx
  +2\Big(\psi\!\Big(\frac N4\Big)+\log 2\Big).
\end{equation}

Since $\Omega\subset B_{R_0}(0)$, for every $x\in\Omega$, we have
$|x|\leq R_0$, hence
\[
  \log\frac1{|x|}\;\geq\;-\log R_0 .
\]
Therefore, it holds that
\begin{equation}\label{eq:log-term-lower}
  2(1+\mu)\int_{\Omega}\Big(\log\frac1{|x|}\Big)\,u(x)^2\,dx
  \;\geq\;
  -2(1+\mu)\log R_0 ,
\end{equation}
where we used $\|u\|_{2}=1$ and $1+\mu>0$.

Combining \eqref{eq:energy-ln-lower} and \eqref{eq:log-term-lower},
we have that
\begin{align*}
  \cE_{\ln,\mu}(u,u)
  &=
  \cE_{\ln}(u,u)
  +2\mu\int_{\Omega}\Big(\log\frac1{|x|}\Big)\,u(x)^2\,dx
  \\[2mm]
  &\geq
  2(1+\mu)\int_{\Omega}\Big(\log\frac1{|x|}\Big)\,u(x)^2\,dx
  +2\Big(\psi\!\Big(\frac N4\Big)+\log 2\Big)
  \\[2mm]
  &\geq
  -2(1+\mu)\log R_0
  +2\Big(\psi\!\Big(\frac N4\Big)+\log 2\Big).
\end{align*}
By density and continuity, the same inequality holds for every
$u\in\bH_0^{\ln,\mu}(\Omega)$ with $\|u\|_{2}=1$.
Taking the infimum over all such $u$, we obtain
\eqref{eq:uniform-lower-bound}.

Finally, if $\Omega\subset B_1(0)$, then $R_0\leq 1$, so
$-\log R_0\geq 0$, and \eqref{eq:uniform-lower-bound-ball} follows
from \eqref{eq:uniform-lower-bound}.
\end{proof}

We remark that $\psi\!\Big(\frac N4\Big)+\log 2>0$ for $N\geq 4$, while it is negative for $N=1,2,3$.

\begin{lemma}\label{lem:scaling-eigenvalue}
Let $\mu>-1$, $\Omega\subset\R^N$ be a bounded Lipschitz domain containing the
origin, $l>0$ and 
\[
  \Omega_l:=l\,\Omega=\{lx:x\in\Omega\}.
\]
 Then
\begin{equation}\label{eq:scaling-lambda}
  \lambda_k^{\ln,\mu}(\Omega_l)
  =
  \lambda_k^{\ln,\mu}(\Omega)- 2(1+\mu)\log l.
\end{equation}

 \end{lemma}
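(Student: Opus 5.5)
We use the dilation $T_l u(x):=l^{-N/2}u(x/l)$, which maps functions on $\Omega$ to functions on $\Omega_l$. The key identity to establish is
\[
\cE_{\ln,\mu}^{\Omega_l}(T_lu,T_lu)=\cE_{\ln,\mu}^{\Omega}(u,u)-2(1+\mu)\log l\,\|u\|_2^2 .
\]
Once this holds, all variational characterizations transfer, and the claim follows.

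\textbf{Step 1: action of $T_l$.} The map $T_l$ sends $C_c^\infty(\Omega)$ bijectively onto $C_c^\infty(\Omega_l)$. It is an $L^2$ isometry and preserves $L^2$ inner products. By the norm equivalence in Theorem \ref{prop:energy-space}, and since the double-integral part of $\|\cdot\|_{\ln}$ changes under dilation only by terms controlled by $\|u\|_2^2$, we will check that $T_l$ extends to a linear bijection $\bH_0^{\ln,\mu}(\Omega)\to\bH_0^{\ln,\mu}(\Omega_l)$. Concretely, the kernel $|x-y|^{-N}$ is scale invariant and only the truncation region $\{|x-y|<1\}$ changes. The difference is an integral over an annulus $\{1/l\le|x-y|<1\}$ (or its reverse), and it is bounded by $C(l)\|u\|_2^2$. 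Alternatively, one checks boundedness directly on smooth functions and then argues by density.

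\textbf{Step 2: energy identity.} We compute on $C_c^\infty$ using Fourier transforms. We have $\widehat{T_lu}(\xi)=l^{N/2}\widehat u(l\xi)$, so
\[
2\int(\log|\xi|)|\widehat{T_lu}|^2\,d\xi=2\int(\log|\eta|-\log l)|\widehat u(\eta)|^2\,d\eta .
\]
This gives $\cE_{\ln,0}(T_lu,T_lu)=\cE_{\ln,0}(u,u)-2\log l\,\|u\|_2^2$. For the potential term, the substitution $x=ly$ gives
\[
2\mu\int_{\Omega_l}\log\frac1{|x|}\,(T_lu)^2\,dx=2\mu\int_\Omega\Big(\log\frac1{|y|}-\log l\Big)u^2\,dy .
\]
Adding the two contributions yields the identity. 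It then extends to all of $\bH_0^{\ln,\mu}$ by density and the continuity bound \eqref{equ eng3}. Polarizing also gives
\[
\cE_{\ln,\mu}^{\Omega_l}(T_lu,T_lv)=\cE_{\ln,\mu}^\Omega(u,v)-2(1+\mu)\log l\,\langle u,v\rangle_2 .
\]

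\textbf{Step 3: transfer of eigenvalues.} By the polarized identity, $u$ is an eigenfunction on $\Omega$ with eigenvalue $\lambda$ if and only if $T_lu$ is an eigenfunction on $\Omega_l$ with eigenvalue $\lambda-2(1+\mu)\log l$. For the induction through Lemma \ref{lm 3.1} and Lemma \ref{lm 3.2}, note that $T_l$ preserves $L^2$-orthogonality. Hence $T_l$ maps $\bH_{\mu,k}(\Omega)$, built from $\phi_j$, onto the corresponding space on $\Omega_l$ built from $T_l\phi_j$, and the constrained minima shift by exactly $-2(1+\mu)\log l$.

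One subtlety is that the minimizers $\phi_k$ may not be unique, so the inductive spaces depend on the choices made. We avoid this by noting that the eigenvalues, counted with multiplicity, are the full spectrum by the completeness in Lemma \ref{lm 3.3}. Alternatively, we simply choose $T_l\phi_j$ as the eigenfunctions on $\Omega_l$.

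\textbf{Main obstacle.} The main technical point is Step 1: justifying that $T_l$ is a bounded bijection between the energy spaces, despite the non-scale-invariant truncation $|x-y|<1$ in $\|\cdot\|_{\ln}$. The Fourier computation in Step 2 is routine.
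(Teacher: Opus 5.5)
Your proposal is correct and follows essentially the paper's argument. Both use the $L^2$-normalized dilation $u\mapsto l^{-N/2}u(x/l)$ and the Fourier computation $\widehat{T_lu}(\xi)=l^{N/2}\widehat u(l\xi)$, which shifts the logarithmic-Laplacian energy by $-2\log l\,\|u\|_2^2$. The change of variables in the potential adds $-2\mu\log l\,\|u\|_2^2$, and the eigenvalues then follow by transferring the variational characterizations.

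Your Steps 1 and 3 supply two things the paper leaves implicit. Step 1 shows that the dilation is a bounded bijection between the energy spaces: the truncation $\{|x-y|<1\}$ changes only by an annulus term bounded by $C|\log l|\,\|u\|_2^2$. This is exactly what justifies the reverse inequality $\lambda_1^{\ln,\mu}(\Omega_l)\ge\lambda_1^{\ln,\mu}(\Omega)-2(1+\mu)\log l$, which the paper asserts without justification. Step 3, by choosing $T_l\phi_j$ as the eigenfunctions on $\Omega_l$, makes the paper's ``similar arguments'' for $k\ge2$ precise.
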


\begin{proof}
For $\theta\in\R$, define the dilation operator
\[
  (D_{l,\theta}u)(x):=l^{\theta}u(l^{-1}x),\qquad x\in\R^N .
\]
Then $D_{l,\theta}$ maps $\bH_0^{\ln,\mu}(\Omega)$ isometrically onto $\bH_0^{\ln,\mu}(\Omega_l)$ if and only if
\begin{equation}\label{eq:theta-choice}
  \theta=-\frac N2 .
\end{equation}

Now we claim that  for every $u\in\bH_0^{\ln,\mu}(\Omega)$ and
$l>0$, 
\begin{equation}\label{eq:scaling-energy}
  \cE_{\ln,\mu}^{\Omega_l}\!\big(D_{l,-N/2}u,D_{l,-N/2}u\big)
  =
  \cE_{\ln,\mu}^{\Omega}(u,u)
  -2(1+\mu)(\log l)\,\|u\|_2^2.
  \end{equation}
Indeed,  for $u\in C_c^\infty(\Omega)$
and $u_l:=D_{l,\theta}u$, a change of variables gives
\[
  \|u_l\|_{L^2(\Omega_l)}^2
  =l^{2\theta}\int_\Omega u(y)^2\,l^N\,dy
  =l^{2\theta+N}\|u\|_2^2 .
\]
Hence $\|u_l\|_{L^2(\Omega_l)}=\|u\|_2$ for all
$u$ if and only if $\theta=-\frac N2$.

Since the symbol of
$(-\Delta)^{\ln}$ is $2\log|\xi|$, the Fourier transform of
$u_l$ is
\[
  \widehat{u_l}(\xi)
  =l^{\theta+N}\widehat u(l\xi),
\]
hence
\begin{align*}
  \cE_{\ln,0}^{\Omega_l}(u_l,u_l)
  &=2\int_{\R^N}(\log|\xi|)\,|\widehat{u_l}(\xi)|^2\,d\xi
  \\
  &=2\,l^{2(\theta+N)}\int_{\R^N}(\log|\xi|)\,|\widehat u(l\xi)|^2\,d\xi
  \\
  &=2\,l^{2\theta+N}\int_{\R^N}\big(\log|\eta|-\log l\big)\,|\widehat u(\eta)|^2\,d\eta
  \\
  &=l^{2\theta+N}\Big[\cE_{\ln,0}^{\Omega}(u,u)
    -2(\log l)\,\|u\|_{2}^2\Big],
\end{align*}
which taking  $\theta=-\frac N2$,  leads to
\begin{equation}\label{eq:log-laplacian-scaling}
  \cE_{\ln,0}^{\Omega_l}(u_l,u_l)
  =\cE_{\ln,0}^{\Omega}(u,u)
  -2(\log l)\,\|u\|_{2}^2.
\end{equation}

Observe that
\begin{align*}
   \int_{\Omega_l}\Big(\log\frac1{|x|}\Big)u_l(x)^2\,dx
  &=l^{2\theta}\int_\Omega
   \Big(\log\frac1{l|y|}\Big)u(y)^2\,l^N\,dy
  \\
  &=l^{2\theta+N}\int_\Omega
   \Big(\log\frac1{|y|}-\log l\Big)u(y)^2\,dy .
\end{align*}
Thus, we have
\begin{equation}\label{eq:potential-scaling}
  \int_{\Omega_l}\Big(\log\frac1{|x|}\Big)u_l(x)^2\,dx
  =
  \int_\Omega\Big(\log\frac1{|y|}\Big)u(y)^2\,dy
  -(\log l)\,\|u\|_{2}^2 .
\end{equation}

Combining \eqref{eq:log-laplacian-scaling} and
\eqref{eq:potential-scaling}, we obtain
\begin{align*}
  \cE_{\ln,\mu}^{\Omega_l}(u_l,u_l)
  &=
  \cE_{\ln,0}^{\Omega_l}(u_l,u_l)
  +2\mu\int_{\Omega_l}\Big(\log\frac1{|x|}\Big)u_l^2\,dx
  \\[2mm]
  &=
  \cE_{\ln,0}^{\Omega}(u,u)
  -2(\log l)\,\|u\|_2^2
  +2\mu\int_\Omega\Big(\log\frac1{|y|}\Big)u^2\,dy
  -2\mu(\log l)\,\|u\|_2^2
  \\[2mm]
  &=
  \cE_{\ln,\mu}^{\Omega}(u,u)
  -2(1+\mu)(\log l)\,\|u\|_2^2 .
\end{align*}
This proves \eqref{eq:scaling-energy}.

Let $\phi_1$ be a first eigenfunction of $\cL_\mu$ in $\Omega$,
normalized by $\|\phi_1\|_{2}=1$, and set
$\phi_{1,l}:=D_{l,-N/2}\phi_1$. Then
$\|\phi_{1,l}\|_{L^2(\Omega_l)}=1$, and by the variational
characterization of $\lambda_1^{\ln,\mu}(\Omega_l)$, we have that
\[
  \lambda_1^{\ln,\mu}(\Omega_l)
  \le
  \cE_{\ln,\mu}^{\Omega_l}(\phi_{1,l},\phi_{1,l})
  =
  \lambda_1^{\ln,\mu}(\Omega)
  -2(1+\mu)\log l .
\]
From (\ref{eq:scaling-energy}),  $\phi_{1,l}$ is the minimal of $  \cE_{\ln,\mu}^{\Omega_l}(u,u)$ for $u\in \bH^{\ln,\mu}_0(\Omega)$,
then
\[
  \lambda_1^{\ln,\mu}(\Omega_l)
  =
  \lambda_1^{\ln,\mu}(\Omega)
  -2(1+\mu)\log l .
\]
    Similar arguments could lead to (\ref{eq:scaling-lambda}) for any $k\in\N$.
\end{proof}

\begin{corollary}\label{cr:ball-scaling}
Assume that  $N\geq 1$, $\Omega\subset \R^N$ is a bounded  Lipschitz continuous  domain  containing the origin
and $\mu>-1$.  Then
\begin{itemize}
\item[(i)]   there exists a critical value
\begin{equation}\label{eq:critical-radius}
  l_c
  :=
  \exp\!\Big(\frac{\lambda_1^{\ln,\mu}(\Omega)}{2(1+\mu)}\Big)
  >0
\end{equation}
such that
\begin{equation}\label{eq:sign-change}
  \lambda_1^{\ln,\mu}(\Omega_l)
  \begin{cases}
    >0,\quad & 0<l<l_c,\\[2mm]
    =0, & l=l_c,\\[2mm]
    <0, & l>l_c .
  \end{cases}
\end{equation}

\item[(ii)] the eigenvalue gaps are scaling-invariant, i.e.
\begin{equation}\label{eq:gaps-invariant}
  \lambda_{k+1}^{\ln,\mu}(\Omega_l)-\lambda_k^{\ln,\mu}(\Omega_l)
  =
  \lambda_{k+1}^{\ln,\mu}(\Omega)-\lambda_k^{\ln,\mu}(\Omega),
  \quad\forall\,l>0 .
\end{equation}

\end{itemize}
\end{corollary}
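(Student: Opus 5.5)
Both parts follow directly from the scaling identity of Lemma~\ref{lem:scaling-eigenvalue}. For every $k\in\N$ and $l>0$ we have
\[
  \lambda_k^{\ln,\mu}(\Omega_l)=\lambda_k^{\ln,\mu}(\Omega)-2(1+\mu)\log l,
\]
where $1+\mu>0$ because $\mu>-1$. For $l>0$ the dilated domain $\Omega_l$ is again a bounded Lipschitz domain containing the origin. Hence Theorem~\ref{teo 1-m} applies to it, and all its eigenvalues $\lambda_k^{\ln,\mu}(\Omega_l)$ are well defined real numbers.

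For part (i), I will consider the function
\[
  g(l):=\lambda_1^{\ln,\mu}(\Omega_l)=\lambda_1^{\ln,\mu}(\Omega)-2(1+\mu)\log l,\qquad l\in(0,\infty).
\]
By Lemma~\ref{lem:scaling-eigenvalue}, $g$ is continuous and strictly decreasing, since $\log$ is strictly increasing and the coefficient $-2(1+\mu)$ is negative. It also tends to $+\infty$ as $l\to0^+$ and to $-\infty$ as $l\to+\infty$. Solving $g(l)=0$ gives $\log l=\lambda_1^{\ln,\mu}(\Omega)/(2(1+\mu))$, which is exactly $l=l_c$ as defined in \eqref{eq:critical-radius}. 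We have $l_c>0$ because it is an exponential of a real number; this uses $\lambda_1^{\ln,\mu}(\Omega)\in\R$, which holds by Lemma~\ref{lm 3.1}. Strict monotonicity then yields the three cases of \eqref{eq:sign-change}:
\[
  g(l)>0 \text{ for } l<l_c,\qquad g(l_c)=0,\qquad g(l)<0 \text{ for } l>l_c.
\]

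For part (ii), I will apply the scaling identity with indices $k+1$ and $k$ and subtract. The common shift $-2(1+\mu)\log l$ cancels, which gives \eqref{eq:gaps-invariant} for every $l>0$.

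The only real dependency is that Lemma~\ref{lem:scaling-eigenvalue} holds for every index $k$ and not only for $k=1$. That proof treats $k=1$ in detail and then says ``similar arguments'' give the general case, so I would add a short justification. The map $D_{l,-N/2}$ is a bijection from $\bH_0^{\ln,\mu}(\Omega)$ onto $\bH_0^{\ln,\mu}(\Omega_l)$; it maps $C_c^\infty(\Omega)$ onto $C_c^\infty(\Omega_l)$ and, by the energy scaling \eqref{eq:scaling-energy} together with \eqref{equ eng3}, it preserves the closure. It also preserves $L^2$ inner products. Consequently it carries the orthogonality constraints defining $\cP_k^{\ln,\mu}(\Omega)$ onto those defining $\cP_k^{\ln,\mu}(\Omega_l)$ (taking the scaled eigenfunctions $D_{l,-N/2}\phi_j^{\ln,\mu}$ as the eigenfunctions on $\Omega_l$). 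By \eqref{eq:scaling-energy}, it shifts the energy of every $L^2$-normalized function by the same constant $-2(1+\mu)\log l$. Taking the minimum over the constraint set in \eqref{Lambda-k-s} then gives the identity for each $k$ by induction on $k$.

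The characterization \eqref{Lambda-k-s} depends on the choice of earlier eigenfunctions only through the eigenspaces, so this induction is consistent. With that in place, both (i) and (ii) reduce to elementary algebra.
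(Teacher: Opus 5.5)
Your proposal is correct and follows the same route as the paper: both read off (i) from the $k=1$ case of the scaling identity $\lambda_k^{\ln,\mu}(\Omega_l)=\lambda_k^{\ln,\mu}(\Omega)-2(1+\mu)\log l$ using $1+\mu>0$, and both obtain (ii) by subtracting the identities for $k+1$ and $k$. Your added justification of that identity for all $k$ usefully fills in the ``similar arguments'' step that the paper leaves implicit: the map $D_{l,-N/2}$ is an $L^2$-preserving bijection that shifts the energy of normalized functions by a fixed constant and carries the orthogonality constraints across, and the choice-independence of $\lambda_k$ ultimately comes from the min--max principle.
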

\begin{proof}
For $\mu>-1$ and $\Omega_l=l\,\Omega$,  it follows by Lemma \ref{lem:scaling-eigenvalue}  that
\begin{equation}\label{eq:scaling-eigenvalue-recall}
  \lambda_k^{\ln,\mu}(\Omega_l)
  =
  \lambda_k^{\ln,\mu}(\Omega)
  -2(1+\mu)\log l,
  \quad\forall\,k\in\N .
\end{equation}
When $k=1$,
\[
  \lambda_1^{\ln,\mu}(\Omega_l )
  =
  \lambda_1^{\ln,\mu}(\Omega)-2(1+\mu)\log l .
\]
Thus
\[
  \lambda_1^{\ln,\mu}(\Omega_l)>0
  \iff
  \log l<\frac{\lambda_1^{\ln,\mu}(\Omega)}{2(1+\mu)}
  \iff
 0< l<l_c ,
\]
and similarly to prove \eqref{eq:sign-change} for the other two cases.

\smallskip
 Subtracting \eqref{eq:scaling-eigenvalue-recall} for
$k$ and $k+1$ gives
\[
  \lambda_{k+1}^{\ln,\mu}(\Omega_l)-\lambda_k^{\ln,\mu}(\Omega_l)
  =
  \lambda_{k+1}^{\ln,\mu}(\Omega)-\lambda_k^{\ln,\mu}(\Omega) ,
\]
because the term $-2(1+\mu)\log r$ is independent of $k$ and
cancels. This proves \eqref{eq:gaps-invariant}.
\end{proof}

\begin{proof}[{\bf Proof of Theorem \ref{teo 1.4}}] Observe that
 Parts $(i)$ and $(ii)$ follow by Lemma \ref{lem:uniform-lower-bound} and Lemma \ref{lem:scaling-eigenvalue},
   respectively.
\end{proof}

\smallskip


\noindent {\bf  Conflicts of interest:}
The authors declare that they have no conflicts of interest regarding this work.

\medskip

\noindent {\bf Data availability:} This paper has no associated data.

\medskip

\noindent{{\bf Acknowledgements:} 
This work is supported by the National Natural Science Foundation of China, (Nos. 12461041 and 12161041) and
 by the Natural Science Foundation of Jiangxi Province,
(Nos. 20252BAC240158 and 20252BAC250004).

 \end{document}